\def\thm@space@setup{%
  \thm@preskip=\parskip \thm@postskip=0pt
}
\declaretheorem[parent=section]{lemma}
\declaretheorem[sibling=lemma]{theorem}
\declaretheorem[sibling=lemma, name=Proposition]{prop}
\declaretheorem[sibling=lemma]{corollary}
\newcommand{\boundary}{\partial}
\newcommand{\set}[1]{\left\{#1\right\}}
\newcommand{\allsuchthat}[2]{\left\{#1\colon#2\right\}}
\newcommand{\closure}[1]{\overline{#1}}
\newcommand{\scc}{simple closed curve\xspace}
\newcommand{\sccs}{simple closed curves\xspace}
\newcommand{\co}{\colon}
\newcommand{\R}{\mathbb{R}}
\newcommand{\defn}[1]{\emph{#1}}
\renewcommand{\L}{\mathcal{L}}
\newcommand{\Lhat}{\widehat{L}}
\newcommand{\AC}{\mathcal{AC}}
\newcommand{\ACm}{\dot{\mathcal{AC}}}
\newcommand{\dm}{\dot{d}}
\newcommand{\dist}[2]{d(#1,#2)}
\newcommand{\gprod}[3]{(#1,#2)_{#3}}
\newcommand{\seg}[2]{[#1,#2]}
\newcommand{\translen}[1]{\left|#1\right|}
\newcommand{\geomint}[2]{\iota(#1,#2)}
\newcommand{\Nbf}{\mathbf{N}}
\newcommand{\Sbf}{\mathbf{S}}
\newcommand{\Wbf}{\mathbf{W}}
\newcommand{\Ebf}{\mathbf{E}}
\newcommand{\cf}{cf.\xspace}
\begin{document}
\title{Bridge numbers of knots in the page of an open book}
\author{R.\ Sean Bowman\and Jesse Johnson}
\address{Oklahoma State University, Stillwater, OK 74078}
\email{r.sean.bowman@gmail.com}
\email{jjohnson@math.okstate.edu}
\date{\today}

\begin{abstract}
  Given any closed, connected, orientable $3$--manifold and integers
  $g\geq g(M), D > 0$, we show the existence of knots in $M$ whose genus $g$
  bridge number is greater than $D$.  These knots lie in a page of an
  open book decomposition of $M$, and the proof proceeds by examining
  the action of the map induced by the monodromy on the arc and curve
  complex of a page.  A corollary is that there are Berge knots of
  arbitrarily large genus one bridge number.
\end{abstract}

\maketitle
\section{Introduction}

Let $M$ be a closed, connected, orientable $3$--manifold.  An
\defn{open book decomposition} of $M$ is a pair $(L, \pi)$ where
$L\subseteq M$ is a link and $\pi\co M\setminus L\to S^1$ is a surface
bundle map such that the closure of each \defn{page}
$F^s=\closure{\pi^{-1}(s)}$ is an embedded surface with boundary $L$.
Each page is homeomorphic to an abstract surface $F$, and the link
exterior $\closure{M\setminus N(L)}$ can be identified with the quotient
$F\times [0,1]$ by a homeomorphism $\phi\co F\to F$ called the
\defn{monodromy}.  The closure of the union of two pages $\Sigma =
F^{s_0} \cup F^{s_1}$ (for $s_0 \neq s_1 \in S^1$) is a Heegaard
surface for $M$ because each component of the complement is
homeomorphic to a handlebody $F^s \times [0,1]$. Moreover, Berge noted
that a nonseparating essential simple closed curve $K \subseteq F^s$
in one of the fibers will be primitive in both handlebodies. Such a
knot is called \defn{doubly primitive}, and Berge showed that doubly
primitive knots in genus two Heegaard surfaces have lens space
surgeries. In fact, essential \sccs in the Seifert surfaces of the
trefoil and figure eight knots (both of which are fibered, and
therefore give rise to open books of $S^3$) are two of Berge's famous
families of knots~\cite{Berge}. In general, a doubly primitive knot in
a genus $g$ Heegaard surface will have a Dehn surgery producing a
$3$--manifold admitting a genus $g-1$ Heegaard surface.

A \defn{bridge surface} for a link $L \subseteq M$ is a Heegaard
surface $\Sigma$ for $M$ such that the intersection of $L$ with each
of the two handlebodies in the complement of $\Sigma$ is a collection
of boundary parallel arcs. The \defn{bridge number} of a bridge
surface is the number arcs of intersection (i.e.\ bridges) with each
handlebody and the \defn{genus $g$ bridge number} $b_g(L)$ of $L$ is
the minimal bridge number among all the genus $g$ bridge surfaces for
$L$.  

In the present paper, we prove a general result for knots in the page
of an open book decomposition of any closed $3$--manifold.

\begin{theorem}\label{theorem:main}
  Let $(L, \pi)$ be an open book decomposition of a closed, connected,
  orientable $3$--manifold $M$ with page $F$ such that $F$ is not a
  disk, annulus, or pair of pants.  For any integer $D > 0$ there are
  infinitely many knots $K\subseteq F$ such that
  \[ b_g(K) > D \]
  for every $g(M)\leq g \leq -\chi(F)$.  The exteriors of these knots
  have Heegaard genus $1-\chi(F)$, and when $M=S^3$ we may choose the
  knots to be hyperbolic.
\end{theorem}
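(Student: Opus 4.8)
The plan is to realize the knots as essential non-separating \sccs in the page $F$ whose isotopy classes are ``complicated'' with respect to the monodromy $\phi$, and to bound their bridge numbers from below by forcing a hypothetical efficient bridge surface into a position compatible with the open book and reading off a short path in $\AC(F)$.

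First I would record what I need about the arc and curve complex. Since $F$ is not a disk, annulus, or pair of pants, $\AC(F)$ is connected and Gromov hyperbolic, and $\phi$ acts on it by isometries. An essential \scc $c\subseteq F$ that is primitive in the complementary handlebody $F\times[0,1]$ (which holds, for instance, whenever $c$ is non-separating) gives a knot $K_c=c\subseteq F^{s}\subseteq M$; pushing $c$ once around $\closure{M\setminus N(L)}$ replaces it by $\phi(c)$, so the isotopy type of $K_c$ in $M$ depends only on the $\langle\phi\rangle$--orbit of $c$. Let $J\subseteq\AC(F)$ be the $\phi$--invariant set of vertices that $\phi$ moves at most some prescribed bounded amount; since $\phi$ lies in the torsion-free group $\mathrm{MCG}(F,\boundary F)$ it is pseudo-Anosov or reducible, and in either case hyperbolicity of $\AC(F)$ forces $J$ to not be coarsely dense, so for each $R$ one can choose a primitive $c_R$ with $\dist{c_R}{J}\ge R$. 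Drilling $K_{c_R}$ then turns one of the two genus $1-\chi(F)$ handlebodies bounded by $\Sigma_0=F^{s_0}\cup F^{s_1}$ into a compression body, exhibiting a genus $1-\chi(F)$ Heegaard splitting of $\closure{M\setminus N(K_{c_R})}$ and hence the bound $g(\closure{M\setminus N(K_{c_R})})\le 1-\chi(F)$.

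The substantive point is the lower bound on $b_g$. Stabilizing a bridge surface does not increase the bridge number, so $b_g(K_c)$ is non-increasing in $g$ on the range $g(M)\le g\le -\chi(F)$, and it suffices to bound $b_{-\chi(F)}$. Suppose $K_c$ is in bridge position with respect to a genus $-\chi(F)$ Heegaard surface $\Sigma$ of $M$ with $b$ bridges. Isotope $\Sigma$ to be transverse to the pages of the open book away from finitely many saddle tangencies and $K_c$ to lie in a single page $F^{s_0}$. Sweeping the page parameter once around $S^1$, the slices $\Sigma\cap F^{s}$ trace a path in $\AC(F)$ whose length is bounded by the number of saddles, which is in turn bounded in terms of $-\chi(\Sigma)$ and the fixed open book; moreover $K_c\subseteq F^{s_0}$ crosses $\Sigma$ in only $2b$ points, so $c$ crosses the system $\Sigma\cap F^{s_0}$ at most $2b$ times and therefore lies within distance $O(b)$ in $\AC(F)$ of the start of the path. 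Going once around sends the end of the path to its $\phi$--image, so we obtain a vertex $x\in\AC(F)$ with $\dist{c}{x}\le O(b)$ and $\dist{x}{\phi(x)}$ bounded in terms of $\chi(F)$; hence $\dist{c}{J}\le\Psi(b)$ for some function $\Psi$ depending only on $b$ and the fixed data. Choosing $R>\Psi(D)$ and $c=c_R$ forces $b>D$, so $b_{-\chi(F)}(K_{c_R})>D$ and hence $b_g(K_{c_R})>D$ for all $g(M)\le g\le -\chi(F)$; since these bridge numbers grow without bound with $R$, the knots $K_{c_R}$ comprise infinitely many distinct types. I expect the positioning-and-counting step---running a Rubinstein--Scharlemann--type graphic argument to control the saddles of the bridge surface against the open book while tracking how $K_c$ punctures the distinguished page---to be the main technical obstacle; the remaining ingredients are either standard $3$--manifold topology or bookkeeping in $\AC(F)$.

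For the sharpness of the Heegaard genus, a Heegaard splitting of $\closure{M\setminus N(K_{c_R})}$ of genus at most $-\chi(F)$ would, by an entirely analogous comparison with the open book, again yield a bounded-length path in $\AC(F)$ connecting $c_R$--data to its $\phi$--image, contradicting $\dist{c_R}{J}\ge R$ once $R$ is large; with the upper bound this gives $g(\closure{M\setminus N(K_{c_R})})=1-\chi(F)$. Finally, when $M=S^3$ and $\phi$ is pseudo-Anosov the link exteriors $\closure{S^3\setminus N(L\cup c_R)}$ are hyperbolic, and a torus knot or satellite among the $K_{c_R}$ would force an essential annulus or torus in such an exterior meeting the incompressible page $\closure{F^{s_0}}$ in a pattern of bounded complexity, a condition that caps the $\phi$--complexity of $c_R$; hence all but finitely many of the $K_{c_R}$ are hyperbolic.
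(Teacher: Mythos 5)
Your high-level strategy---find a curve far from the $\phi$-invariant set of small translation distance, then show that a low-genus bridge surface forces a short $\phi$-nearly-invariant path close to that curve---is the right one and matches the paper's. But two of your steps have genuine gaps, and both are precisely the places where the paper does its hardest work.

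First, you work in $\AC(F)$, but the monodromy of an open book is an element of $\mathrm{MCG}(F,\boundary F)$ and can include boundary Dehn twists (or even be a pure boundary twist). Such a $\phi$ acts \emph{trivially} on $\AC(F)$: isotopies that slide arc endpoints around a boundary component undo the twisting, so every vertex is fixed. In that case your set $J$ is all of $\AC(F)$ and no $c_R$ exists. This is exactly why the paper introduces the \emph{marked} arc and curve complex $\ACm(F)$, where arcs are taken up to isotopy fixing a marked point on each boundary component; boundary twists then act non-trivially, and the Rafi--Schleimer argument (cited in \autoref{exists-point-far-from-axis}) gives that bounded translation distance on all of $\ACm(F)$ forces $\phi$ to be the identity. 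Your phrase ``hyperbolicity of $\AC(F)$ forces $J$ to not be coarsely dense'' also isn't justified as stated for reducible $\phi$ even absent boundary twists: it is a theorem, not a soft consequence of hyperbolicity.

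Second, the claim ``Isotope $\Sigma$ to be transverse to the pages\dots the slices $\Sigma\cap F^s$ trace a path in $\AC(F)$'' skips the hard part. Nothing guarantees that $\Sigma\cap F^s$ consists of essential arcs and curves: the slices may contain trivial circles and $\boundary$-parallel arcs, in which case they don't define vertices of the complex and you cannot read off a path. The paper spends a full section (\autoref{lem:bridge-taxonomy}, \autoref{thin-sfce}, \autoref{si-bridge-sfce}, culminating in \autoref{exists-nice-surface}) establishing that, after invoking the Taylor--Tomova classification of weakly reducible bridge surfaces (to remove stabilizations, boundary stabilizations, perturbations, and removable components, and possibly to pass from $\Sigma$ to an essential thin surface $S$), one can pick a surface $S$ and a sweep-out level so that every arc of $S_L\cap F^s_L$ is essential in both surfaces, that $\pi|_{S_L}$ is Morse or almost-Morse, and that the two simultaneous saddles in the almost-Morse case still move arcs by distance at most one. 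These facts are what let \autoref{lem:distance-bound} turn the index count $-\chi(S_L)$ into a bound on $d(x,\phi(x))$. Your ``read off a short path from saddles'' step assumes all of this without argument, and it would fail for a weakly reducible $\Sigma$ with, say, a removable component of $L$. The remaining pieces of your proposal (intersection number controls distance, Heegaard genus upper and lower bounds, atoroidality when $M=S^3$) are consistent with the paper's argument, but the two gaps above are load-bearing and cannot be waved away.
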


Here $g(M)$ is the Heegaard genus of $M$, the minimum genus over all
Heegaard surfaces for $M$.  As stated earlier, knots in the fiber of a
fibered link are primitive on both sides of the natural genus
$1-\chi(F)$ splitting given by two copies of the fiber.  This means
that the exteriors of these knots have Heegaard splittings of Hempel
distance at most two.  Contrast the following corollary with theorems
of Minsky-Moriah-Schleimer~\cite{MinskyEtAl} and
Moore-Rathbun~\cite{MooreRathbun} which exhibit knots in $S^3$ and an
arbitrary closed, orientable $3$--manifold, respectively, that have
high bridge number at many genera.

\begin{corollary}\label{cor1}
  Let $M$ be a closed, connected, orientable $3$--manifold admitting
  an open book decomposition with pages of Euler characteristic $-k$,
  $k>0$, which are not $3$--punctured spheres.  Then for any integers
  $g\geq k$ and $D>0$, there are infinitely many knots $K\subseteq M$
  such that
  \begin{enumerate}
  \item $K$ has a nontrivial surgery yielding a manifold of Heegaard
    genus at most $g$,
  \item $b_{g'}(K)>D$ for every $g(M)\leq g'\leq g$, and
  \item $\closure{M\setminus N(K)}$ has a minimal genus Heegaard
    splitting of distance at most two and genus $g+1$.
  \end{enumerate}
\end{corollary}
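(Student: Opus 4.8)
The plan is to derive Corollary~\ref{cor1} from Theorem~\ref{theorem:main} after first arranging an open book of $M$ whose page has Euler characteristic exactly $-g$. Given $g\ge k$, if $g>k$ I would plumb $g-k$ Hopf bands onto the given open book; Hopf plumbing (Murasugi sum with a Hopf band) does not change the ambient manifold, so this produces an open book of $M$ with connected page $F$ and $\chi(F)=-g$, while if $g=k$ I keep the given open book. In either case $F$ is not a disk or annulus, since $\chi(F)=-g\le -1$, and $F$ is not a pair of pants: if $g\ge 2$ then $\chi(F)\le -2$, and if $g=1$ then necessarily $k=1$, so $F$ is the original page, which is not a $3$--punctured sphere by hypothesis. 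Hence Theorem~\ref{theorem:main} applies to this open book.

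Feeding this open book to Theorem~\ref{theorem:main} produces infinitely many knots $K\subseteq F$ with $b_{g'}(K)>D$ for every $g(M)\le g'\le -\chi(F)=g$, which is precisely conclusion (2), and with exterior $\closure{M\setminus N(K)}$ of Heegaard genus $1-\chi(F)=g+1$. I expect the knots furnished by the theorem to be nonseparating essential simple closed curves in the page --- this is the natural setting, and it is what forces the exterior to have the stated genus --- so by the observation of Berge recalled in the introduction, each $K$ is doubly primitive with respect to the genus $g+1$ Heegaard surface $\Sigma=F^{s_0}\cup F^{s_1}$ of $M$. Dehn surgery on $K$ along the surface slope $\Sigma\cap\boundary N(K)$ is nontrivial, since the surface slope of a knot lying on an embedded surface is never meridional, and, as also recalled in the introduction, it yields a $3$--manifold admitting a Heegaard surface of genus $(g+1)-1=g$; this is conclusion (1).

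For conclusion (3) I would invoke the fact, used in the introduction for fibered knots, that a knot primitive on both sides of a genus $h$ Heegaard surface has an exterior admitting a genus $h$ Heegaard splitting of Hempel distance at most two; applied to $\Sigma$ with $h=g+1$, together with the genus computation of the previous paragraph, this gives a minimal genus, distance at most two splitting of $\closure{M\setminus N(K)}$. Concretely, writing $W_1,W_2$ for the handlebodies bounded by $\Sigma$, I would push $K$ into $\interior{W_1}$, use primitivity to see that $W_1\setminus N(K)$ becomes a compression body with $\boundary_-=\boundary N(K)$ and $\boundary_+=\Sigma$, hence that $(W_1\setminus N(K))\cup_\Sigma W_2$ is a genus $g+1$ splitting, and then bound its distance by starting from a primitivizing disk $D_1=\alpha\times I$ for an essential arc $\alpha$ meeting $K$ once and passing to the curve $\gamma$ bounding a regular neighborhood of $\boundary D_1\cup K$ in $\Sigma$: this $\gamma$ is a single essential curve disjoint from $K$, it represents the commutator of $\boundary D_1$ and $K$ in $\pi_1(\Sigma)$, hence it is trivial in $\pi_1(W_1)$ and bounds a disk in $W_1\setminus N(K)$, and one finishes by producing a meridian of $W_2$ within distance two of $\gamma$ from the meridians that the product description $W_2\cong F\times I/\phi$ supplies. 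I expect this last distance estimate --- controlling the curve complex distance after drilling $K$, rather than merely knowing that the two-fibers splitting of $M$ itself has distance at most two, since drilling shrinks the disk set on one side --- to be the only real difficulty; the remaining steps are bookkeeping on top of Theorem~\ref{theorem:main} and the doubly primitive picture.
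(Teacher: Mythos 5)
Your proposal follows essentially the same route as the paper: plumb $g-k$ Hopf bands to adjust the page so that $-\chi(F)=g$, apply Theorem~\ref{theorem:main} to get conclusion (2) and the Heegaard genus of the exterior, and then invoke the doubly primitive structure of a nonseparating curve in the page to get the surface--slope surgery for (1) and the genus $g+1$, distance--at--most--two splitting for (3). You spell out a few points the paper leaves terse --- checking that the adjusted page is not a disk, annulus, or pair of pants; noting that the surface slope is non-meridional so the surgery is genuinely nontrivial; and sketching why pushing $K$ into a handlebody gives a weakly reducible splitting of the exterior --- but these are elaborations of the same argument, not a different one.

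One caution: the theorem statement does not literally say the knots it produces are nonseparating simple closed curves. You infer it (correctly) from the Heegaard-genus clause, and the paper makes the same implicit leap in its own proof of the corollary, so this is not a gap relative to the paper; but if you wanted a fully self-contained argument you would remark that the vertices in $\ACm(F)$ far from the axis $A_\phi$ can always be taken to be nonseparating loops, which is what the doubly primitive picture needs.
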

\begin{proof}
  Every closed, connected, orientable $3$--manifold admits an open
  book decomposition (see~\cite{Etnyre06} for several proofs).  The
  operation of stabilization, or plumbing with a Hopf band, decreases
  the Euler characteristic of the page by one.  Therefore we may
  choose an open book decomposition of $M$ with pages $F$ satisfying
  $-\chi(F)= g$ for any given $g\geq k$.
  Applying~\autoref*{theorem:main}, we obtain knots with large bridge
  number for every $g(M)\leq g'\leq g$.

  As noted above, $K$ lies in a genus $1-\chi(F)$ splitting surface
  $\Sigma$ so that it is primitive on both sides.  By Berge's
  construction~\cite{Berge}, surgery at the surface slope yields a
  manifold of Heegaard genus at most $g$.  Furthermore, isotoping
  $\Sigma$ off $K$ so that $K$ ends up in one of the two handlebodies
  $\Sigma$ bounds creates a Heegaard surface defining a Heegaard
  splitting of the knot exterior $\closure{M\setminus N(K)}$. This
  splitting will have distance at most two by construction.  The
  exterior of $K$ has no splittings of smaller genus because of the
  previous bridge number bounds.
\end{proof}

Finally, we note that there are Berge knots with arbitrarily large
genus one bridge number.  This follows directly by
applying~\autoref*{theorem:main} to knots in the fiber of the trefoil or
figure eight knot.  (See~\cite{BowmanEtAl} for another argument.)
Note that this fact has been known to Baker for some time~\cite{Baker}.

\begin{corollary}
  There are Berge knots of type VII and VII, knots which lie in the
  fiber of the trefoil or figure eight, respectively, with arbitrarily
  large genus one bridge number.
\end{corollary}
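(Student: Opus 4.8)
My plan is to obtain this as an immediate application of \autoref*{theorem:main} to the fibrations of the trefoil and figure eight complements. First recall that both the trefoil and the figure eight are fibered knots in $S^3$ whose fiber $F$ is a once-punctured torus; since a once-punctured torus is neither a disk, an annulus, nor a pair of pants, \autoref*{theorem:main} applies to the associated open book decompositions of $S^3 = M$. Here $-\chi(F) = 1$ while $g(M) = g(S^3) = 0$, so the range of genera $g(M) \le g \le -\chi(F)$ to which the theorem applies is exactly $\{0,1\}$ and in particular contains $g = 1$. Hence, for each $D > 0$, the theorem yields infinitely many knots $K \subseteq F$ with $b_1(K) > D$; letting $D$ be arbitrarily large produces knots in the fiber of the trefoil (respectively the figure eight) with arbitrarily large genus one bridge number.

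The second step is to identify these knots with Berge knots. The knots produced by \autoref*{theorem:main} are essential simple closed curves in the page $F$ — this is precisely what forces their exteriors to have Heegaard genus $1 - \chi(F) = 2$ rather than $1$. In a once-punctured torus every essential simple closed curve is nonseparating, so by the observation of Berge recalled in the introduction, each such curve is primitive in both handlebodies bounded by the genus two Heegaard surface $F^{s_0} \cup F^{s_1}$; that is, $K$ is doubly primitive. A doubly primitive curve on the fiber surface of the trefoil is by definition a Berge knot of type VII, and one on the fiber surface of the figure eight is a Berge knot of type VIII (\cf\cite{Berge}), which is the desired conclusion.

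I do not expect a genuine obstacle here: all of the work resides in \autoref*{theorem:main}, and the corollary consists only of checking that the trefoil and figure eight open books satisfy its hypotheses and that the value $-\chi(F) = 1$ lies in the permitted range of genera. The one point deserving a word is that the curves output by \autoref*{theorem:main} must be essential in $F$, hence — in a once-punctured torus — nonseparating, hence doubly primitive. This follows from the construction in the proof of that theorem, where $K$ is chosen so as to carry large distance in the arc and curve complex of $F$ and is therefore neither null-homotopic nor boundary-parallel.
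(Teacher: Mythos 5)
Your proof is correct and takes essentially the same approach as the paper, which simply observes that the corollary follows directly from applying \autoref*{theorem:main} to the once-punctured-torus open books of $S^3$ coming from the trefoil and figure eight; you have merely spelled out the verification of the hypotheses and the identification with Berge types VII and VIII that the paper leaves implicit.
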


\textbf{Acknowledgments} We would like to thank Ken Baker for
sharing his knowledge of bridge numbers of Berge knots.  We would also
like to thank Scott Taylor for helpful conversations on
arguments in~\cite{BlairEtAl}.

\section{Definitions}
Let $M$ be a compact, connected, orientable $3$--manifold whose
boundary is either empty or a union of one or more
tori. (\autoref*{theorem:main} refers to the case when $\partial M =
\emptyset$, but we will need the more general case for a number of
steps in the argument.)  We write $|X|$ to denote the number of
components of a manifold $X$.  For a submanifold $L$ of $M$, we write
$N(L)$ for a closed regular neighborhood of $L$ in $M$.  Let
$F\subseteq M$ be a properly embedded surface and $L$ a properly
embedded $1$--manifold. We will write $F_L$ to mean
$\closure{F\setminus N(L)}$, and similarly $M_L$ to mean
$\closure{M\setminus N(L)}$.  An \defn{essential curve} in $F_L$ is a
\scc that does not bound a disk in $F_L$ and is not parallel to a
boundary component of $F_L$.  A disk $D$ embedded in $M_L$ is a
\defn{compressing disk} for $F_L$ if $D\cap F = \boundary D$ is an
essential \scc in $F_L$.  The surface $F$ will be called
\defn{incompressible} if there are no compressing disks for $F$.

An arc properly embedded in $F_L$ is called \defn{essential} if it is
not parallel in the surface to a subarc of $\boundary F_L$.  We say an
incompressible surface $F_L$ is \defn{$\boundary$--compressible} if
there is a disk $D$ in $M_L$ so that $\alpha=D\cap F_L$ is an
essential arc in $F_L$, $\beta=D\cap\boundary M_L$ is an arc in
$\boundary D$, $\boundary D=\alpha\cup\beta$, and $\alpha\cap\beta =
\boundary\alpha = \boundary\beta$.  Otherwise $F_L$ is
\defn{$\boundary$--incompressible}.  When $F_L$ is incompressible,
$\boundary$--incompressible, not parallel to a component of $\boundary
M_L$, and not a sphere bounding a ball disjoint from $L$, we say that
$F_L$ is \defn{essential}.

The \defn{arc and curve complex} $\AC(F)$ of $F$, is a simplicial
complex whose vertices represent isotopy classes of essential \sccs or
arcs properly embedded in $F$, modulo isotopy.  When $g(F)>1$,
$g(F)=1$ and $|\boundary F|>0$, or $g(F)=0$ and $|\boundary F| > 3$,
two vertices bound an edge if they have disjoint representatives in
$F$.  We will only be concerned with the one skeleton of $\AC(F)$ in
the present work.  The distance $d(u, v)$ between vertices in $\AC(F)$
is the number of edges in the shortest path from $u$ to $v$.
Similarly, if $U$ and $V$ are subsets of vertices of $\AC(F)$, $d(U,
V)$ is the shortest path from a vertex of $U$ to a vertex of $V$.

Since the monodromy of a surface bundle may twist an arc around a
boundary component, we need to keep track of arcs up to isotopy fixing
the boundary.  Choose a collection of points $m\subseteq\boundary F$,
one in each component of $\boundary F$.  Define the \defn{marked arc
  and curve complex} of $F$, $\ACm(F)$, to be the simplicial complex
whose vertices represent isotopy classes of essential \sccs or arcs
properly embedded in $F$ and disjoint from $m$, modulo isotopy
disjoint from $m$.  As before, when $g(F)>1$, $g(F)=1$ and $|\boundary
F|>0$, or $g(F)=0$ and $|\boundary F| > 3$, two vertices bound an edge
if they have disjoint representatives in $F$.  We denote the metric on
$\ACm(F)$ by $\dm$, and define $\dm(U,V)$ for subsets of vertices as
above. 

Each isotopy class defining a vertex of $\ACm(F)$ is contained in an
isotopy class that defines a vertex of $\AC(F)$, so there is a natural
map $p$ from the vertices of $\ACm(F)$ to $\AC(F)$.  This map takes
simplices to simplices (though it may take a given simplex in
$\ACm(F)$ to a lower dimensional simplex in $\AC(F)$), so $p$ extends
to a simplicial map between the two simplicial complexes.

Although we do not use the following lemma in this paper, it clarifies
the relationship between $\ACm(F)$ and the more commonly encountered
$\AC(F)$.

\begin{lemma}\label{lem:marked-complex-qi}
  The map $p$ defines a quasi-isometry from $\ACm(F)$ to $\AC(F)$ with
  multiplicative constant one.  In fact, for any pair of vertices $a,
  b$ in $\ACm(F)$,
  \[ \dm(a, b) - 2 \leq d(p(a), p(b)) \leq \dm(a, b). \]
\end{lemma}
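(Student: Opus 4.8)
The plan is to establish the two inequalities separately, both exploiting the simplicial map $p$ and a way to lift paths from $\AC(F)$ back up to $\ACm(F)$.

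The upper bound $d(p(a),p(b)) \leq \dm(a,b)$ is immediate: since $p$ is a simplicial map, any edge-path in $\ACm(F)$ from $a$ to $b$ projects to an edge-path (possibly with some edges collapsed to vertices) in $\AC(F)$ from $p(a)$ to $p(b)$ of no greater length, so the distance cannot increase. I would state this in one sentence.

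For the lower bound $\dm(a,b) - 2 \leq d(p(a),p(b))$, the idea is to take a geodesic $v_0 = p(a), v_1, \dots, v_n = p(b)$ in $\AC(F)$ and lift it to a path in $\ACm(F)$ that is at most two steps longer. The key observation is that $p$ is surjective on vertices and, more importantly, that it is "almost surjective on edges" in the following sense: given an edge $\{v_i, v_{i+1}\}$ in $\AC(F)$ and \emph{any} lift $\widetilde{v_i}$ of $v_i$, there is a lift $\widetilde{v_{i+1}}$ of $v_{i+1}$ with $\dm(\widetilde{v_i}, \widetilde{v_{i+1}}) \leq 1$ --- that is, one can realize disjoint representatives of $v_i, v_{i+1}$ in $F$, and the chosen representative of $v_i$ can be isotoped (disjointly from its partner) off the marked points $m$, yielding adjacent vertices upstairs. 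Wait, the subtlety is that the chosen lift $\widetilde{v_i}$ may wrap around a boundary component differently from what the disjoint realization gives; so instead I would argue that \emph{some} lift of $v_i$ and \emph{some} lift of $v_{i+1}$ are adjacent in $\ACm(F)$, giving a path $u_0, u_1, \dots, u_n$ with $p(u_j) = v_j$ and $\dm(u_j, u_{j+1}) \leq 1$. Then $\dm(u_0, u_n) \leq n = d(p(a), p(b))$. Finally, $a$ and $u_0$ are both lifts of $p(a)$, and any two lifts of the same vertex of $\AC(F)$ differ by boundary twisting and hence are at distance at most... one? Here I need: any two vertices of $\ACm(F)$ mapping to the same vertex of $\AC(F)$ are at $\dm$-distance at most $1$. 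This holds because two such arcs/curves differ only by Dehn twists about curves parallel to $\boundary F$, and for a single such arc one can find a representative disjoint from all these twisted copies (push it into the interior away from the relevant boundary collars) --- giving an edge between them, or at worst a distance-$1$ estimate. Applying this at both ends contributes at most $2$, so $\dm(a,b) \leq \dm(a,u_0) + \dm(u_0,u_n) + \dm(u_n,b) \leq 1 + n + 1$, i.e.\ $\dm(a,b) - 2 \leq d(p(a),p(b))$.

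The main obstacle I anticipate is the edge-lifting step: verifying that for an edge $\{v_i,v_{i+1}\}$ downstairs there really are adjacent lifts upstairs, and especially pinning down the claim that any two lifts of a single vertex are within $\dm$-distance $1$ (rather than merely bounded). This requires a careful picture of how the marked points obstruct isotopies --- essentially that the difference between lifts is captured by boundary Dehn twists, and that a single arc or curve, together with one twisted copy of itself, can always be made disjoint by choosing an appropriate representative in a collar neighborhood of $\boundary F$. Degenerate low-complexity cases (small pages) must be excluded exactly by the hypotheses under which $\AC(F)$ and $\ACm(F)$ are defined as stated. Once these geometric facts are in hand, the assembly of the two inequalities is routine.
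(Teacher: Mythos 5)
Your overall plan --- project for the upper bound, lift a geodesic in $\AC(F)$ to an edge-path in $\ACm(F)$ and correct at the endpoints --- is the same shape as the paper's argument, but there are two gaps in the lower bound. First, your retreat from ``given \emph{any} lift $\widetilde{v_i}$ of $v_i$, there is an adjacent lift of $v_{i+1}$'' to the weaker ``\emph{some} lift of $v_i$ and \emph{some} lift of $v_{i+1}$ are adjacent'' was both unnecessary and fatal: the weaker statement does not produce a path $u_0,\dots,u_n$, since the lift of $v_i$ used for the edge $\{v_{i-1},v_i\}$ need not coincide with the one used for $\{v_i,v_{i+1}\}$. The stronger version is true and is essentially what the paper proves: take a representative $\alpha$ of the given lift $\widetilde{v_i}$ --- it already avoids $m$ and also represents $v_i$ --- use the edge $\{v_i,v_{i+1}\}$ in $\AC(F)$ to find a representative of $v_{i+1}$ disjoint from $\alpha$, and then nudge it off the finite set $m$ while preserving disjointness from $\alpha$ (possible since $\alpha$ avoids a neighborhood of each marked point). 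The ``wrapping around a boundary component'' worry only arises if you pick an abstract disjoint realization of the edge first and then try to match it to $\widetilde{v_i}$; starting from a representative of $\widetilde{v_i}$ avoids it entirely, and in particular lets you take $u_0 = a$ so that no slack is spent at the $a$-end.

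Second, and more seriously, the claim that any two lifts of a single vertex of $\AC(F)$ are at $\dm$-distance at most $1$ is false. Arc lifts of a fixed vertex form an orbit under boundary twisting, and an arc $\alpha$ together with a sufficiently high power of a boundary twist applied to it cannot be made disjoint while avoiding $m$: the winding introduced in a collar of $\boundary F$ cannot be undone without sliding an endpoint all the way around a boundary circle, which would cross a marked point. Your proposed justification (``push it into the interior away from the relevant boundary collars'') is exactly what fails for arcs, whose endpoints are pinned to $\boundary F$. The statement that is true, and the one the paper uses, is that two such lifts are at $\dm$-distance at most $2$: choose an essential simple closed curve $\gamma$ in the interior of $F$ disjoint from one lift; since $\gamma$ is disjoint from $\boundary F$, boundary twists fix $\gamma$, so $\gamma$ is disjoint from the other lift as well, giving a length-two path through $\gamma$. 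Applying this correction only at the $b$-end (the $a$-end costs nothing once $u_0 = a$) yields $\dm(a,b)\le d(p(a),p(b))+2$, which is the claimed bound; applying a $\le 2$ correction at both ends, as in your assembly, would only give an additive constant of $4$.
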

\begin{proof}
  Let $a$ and $b$ be vertices in $\ACm(F)$.  Because the map $p$ takes
  edges and vertices of $\ACm(F)$ to edges and vertices of $\AC(F)$,
  it is immediate that $d(p(a), p(b))\leq \dm(a, b)$. 

  For the second inequality, note that the map $p$ is one-to-one on the
  vertices of $\AC(F)$ represented by \sccs, but infinite-to-one on
  the vertices represented by arcs.  Let $v_0,v_1,\dots,v_n$ be a path
  in $\AC(F)$ such that $v_0=p(a)$, $v_n=p(b)$, and $n=d(p(a), p(b)$.  

  If $v_1$ is a \scc in $F$, then define $a_1$ to be a vertex in
  $\ACm(F)$ represented by this \scc.  Otherwise, choose
  $\alpha\subseteq F$ to be a representative of $a$.  Since
  $p(a)=v_0$, $\alpha$ is also a representative of $v_0$.  Moreover,
  because $v_0$ and $v_1$ cobound an edge in $\AC(F)$, we can choose a
  representative $\beta$ for $v_1$ that is disjoint from $\alpha$.  In
  fact, we can choose $\beta$ so that its endpoints are disjoint from
  $m$, the marked points in the boundary of $F$.  Thus $\beta$ defines
  a vertex $a_1$ in $\ACm(F)$.  By construction $a_0$ and $a_1$ will
  cobound an edge in $\ACm(F)$.

  We can repeat this construction for each successive value $j\leq n$
  to find a vertex $a_j$ such that $p(a_j)=v_j$ and $v_j, v_{j-1}$
  cobound an edge in $\ACm(F)$.  

  For the final vertex $v_n$, we find that $p(a_n)=v_n=p(b)$.  Let
  $b'$ be a vertex represented by an essential \scc in $F$ disjoint
  from the arc of \scc representing $b$.  Because this \scc is
  disjoint from the boundary, it will also be disjoint from a
  representative for $a_n$.  Thus the sequence $a, a_1, a_2,\dots,
  a_n, b', b$ defines a path in $\ACm(F)$ of length $n+2$, so $\dm(a,
  b)\leq d(p(a), p(b)) + 2$.  Combining this with the previous
  inequality completes the proof.
\end{proof}


Work of Masur and Minsky~\cite{MasurMinsky} implies that $\ACm(F)$ is
a $\delta$--hyperbolic space in the sense of Gromov.  The monodromy
map $\phi$ of an open book induces an isometry of $\ACm(F)$ which we
will also denote by $\phi$.  The distance between a point
$x\in\ACm(F)$ and its image is called the \defn{translation distance}
of $x$ under the isometry.  Bachman and
Schleimer~\cite{BachmanSchleimer} give a bound on the translation
distance of the action of a surface bundle monodromy on the curve
complex of a fiber, and we provide a similar bound for an open book
monodromy acting on $\ACm(F)$.  

A crucial concept used in the present paper is that of the \defn{axis}
of $\phi$, $A_{\phi}\subseteq\ACm(F)$, roughly the set of points of
$\ACm(F)$ that have sufficiently small translation distance under
$\phi$ (see~\autoref*{section:curve-complex}).  We show that when
$\phi\co F\to F$ is pseudo-Anosov, this axis behaves similarly to the
axis of a hyperbolic isometry of hyperbolic $2$--space.

Let $\Sigma$ be a genus $g$ Heegaard splitting of $M$, where $0 \leq g
\leq -\chi(F)$.  We identify $F$ with $F^0$ and let $K$ be a closed
curve in $F$. The curve $K$ can be viewed from two perspectives:
first, as a loop in $F$, it defines a vertex in the marked arc and
curve complex. Second, since $F$ is embedded in $M$, $K$ defines a
knot in $M$. The proof of \autoref*{theorem:main} is based on a
comparison between these two views of $K$.

\section{Isometries of $\ACm(F)$}
\label{section:curve-complex}
In this section we consider $K$ as a vertex of $\ACm(F)$ and examine
its image under an isometry induced from a surface automorphism.  We
define the axis of such an isometry and show that points close
to the axis have small translation distance and, conversely, that
points far from the axis have large translation distance.  Finally, we
relate the distance between two vertices in $\ACm(F)$ to the geometric
intersection number of representatives of those vertices.

Let $(X,d)$ be a metric space with $X$ infinite. In our case, $X$ will
be the $1$--skeleton of the arc and curve complex $\ACm(F)$, with the
nonstandard convention of including points of the edges in the metric
space as well as the vertices. Recall that we have defined
\[ d(A, B) = \inf\allsuchthat{d(a, b)}{a\in A, b\in B}. \] 

A \defn{geodesic path} from $x\in X$ to $y\in X$ is a map $c$ from a
closed interval $[0,l]\subseteq\R$ to $X$ such that $c(0)=x$,
$c(l)=y$, and $d(c(t), c(t'))=|t-t'|$ for every $t,t'\in [0,l]$.  The
image of $c$ is called a \defn{geodesic segment} or \defn{arc}.  When
the choice of geodesic segment connecting two points $x,y\in X$ does
not matter, we denote it by $[x,y]$.  Similarly, a \defn{geodesic ray}
is an isometric embedding of the interval $[0,\infty)$ to $X$, and a
\defn{geodesic line} is an isometric embedding of $\R$ to $X$.

Define the \defn{Gromov product}
\[ \gprod{x}{y}{w} = \frac{1}{2}\left(\dist{x}{w} 
            + \dist{y}{w} - \dist{x}{y}\right). \]

We say that the space $(X, d)$ is \defn{$\delta$--hyperbolic} if
triangles in $X$ are \defn{$\delta$--thin}: each side is contained in
the $\delta$ neighborhood of the other two.  Equivalently,
\[ \dist{x}{q} + \dist{y}{p} \leq \max\{\dist{x}{y} + \dist{p}{q},
  \dist{x}{p} + \dist{y}{q}\} + 2\delta \] for any points $x, y, p,
  q\in X$ (see~\cite[III.H.1.20]{Bridson99}).

Given a closed subset $A\subseteq X$ and a point $p\in X\setminus A$,
define a \defn{projection} of $x$ to $A$ to be a point $p\in A$ such
that $d(x,p) = d(x, A)$.

\begin{lemma}\label{side-estimate}
  Let $x\in X$ and let $p$ be a projection of $x$ to a geodesic
  $\tau$.  Then for every $q\in\tau$,
  \[ \gprod{x}{q}{p}\leq 4\delta \]
  and so
  \[ \dist{x}{q}\geq\dist{x}{p}+\dist{p}{q} - 8\delta. \]
\end{lemma}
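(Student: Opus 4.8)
The plan is to reduce the two displayed inequalities to the thin-triangles inequality stated just above, applied to a single carefully chosen quadruple of points. The second inequality is purely formal once the first is established: unwinding the definition of the Gromov product, $\gprod{x}{q}{p}\leq 4\delta$ reads $\dist{x}{p}+\dist{q}{p}-\dist{x}{q}\leq 8\delta$, which is exactly $\dist{x}{q}\geq\dist{x}{p}+\dist{p}{q}-8\delta$. So the whole content is the bound $\gprod{x}{q}{p}\leq 4\delta$.

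For the first inequality, I would argue by contradiction: suppose $\gprod{x}{q}{p} > 4\delta$ for some $q\in\tau$. The idea is that if the Gromov product of $x$ and $q$ based at $p$ is large, then a point $r$ on the geodesic $[p,q]\subseteq\tau$ at distance roughly $\gprod{x}{q}{p}$ from $p$ is strictly closer to $x$ than $p$ is, contradicting that $p$ is a projection of $x$ to $\tau$. To make this precise using only the four-point formulation of $\delta$-hyperbolicity, I would pick $r\in[p,q]$ (this subsegment lies in $\tau$ since $\tau$ is a geodesic through both $p$ and $q$) with $\dist{p}{r}=t$ for a value of $t$ to be chosen slightly less than $\gprod{x}{q}{p}$, and apply the inequality
\[ \dist{x}{r}+\dist{p}{q}\leq\max\{\dist{x}{p}+\dist{r}{q},\ \dist{x}{q}+\dist{p}{r}\}+2\delta \]
to the quadruple $x,p,q,r$ (matching $x\leftrightarrow x$, $y\leftrightarrow r$, $p\leftrightarrow p$, $q\leftrightarrow q$ in the stated formula). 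Since $r\in[p,q]$ we have $\dist{r}{q}=\dist{p}{q}-t$ and $\dist{p}{r}=t$, so the right-hand side becomes $\max\{\dist{x}{p}-t,\ \dist{x}{q}-\dist{p}{q}+t\}+2\delta$, hence
\[ \dist{x}{r}\leq\max\{\dist{x}{p}-t,\ \dist{x}{q}-\dist{p}{q}+t\}+2\delta. \]
Now choose $t$ so that the two terms in the max are equal, i.e.\ $t=\tfrac12(\dist{x}{p}-\dist{x}{q}+\dist{p}{q})=\gprod{x}{q}{p}$; with this choice both terms equal $\dist{x}{p}-\gprod{x}{q}{p}$, giving $\dist{x}{r}\leq\dist{x}{p}-\gprod{x}{q}{p}+2\delta$. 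If $\gprod{x}{q}{p}>2\delta$ this forces $\dist{x}{r}<\dist{x}{p}=\dist{x}{\tau}$, contradicting the definition of the projection $p$. This already yields $\gprod{x}{q}{p}\leq 2\delta$, which is stronger than claimed; I would present the constant $4\delta$ since it is the robust bound and leaves room for the (harmless) subtlety below.

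The main obstacle is the minor point that $t=\gprod{x}{q}{p}$ must satisfy $0\leq t\leq\dist{p}{q}$ for $r\in[p,q]$ to make sense: the inequality $t\geq 0$ is automatic from the triangle inequality, and $t\leq\dist{p}{q}$ follows because $\gprod{x}{q}{p}\leq\dist{q}{p}$ always. If $\gprod{x}{q}{p}$ happens to be small (at most $4\delta$) there is nothing to prove, so we may assume it exceeds $4\delta$ and the chosen $r$ is a genuine interior point of the segment, where the contradiction bites. One should also note that $p\in X\setminus\tau$ is not actually required for the statement — $p$ lies on $\tau$ — so "projection of $x$ to $\tau$" here just means a nearest point of $\tau$ to $x$, and the argument goes through verbatim. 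With these bookkeeping remarks dispatched, the proof is a single application of the four-point condition plus an optimization of the free parameter $t$, and the second displayed inequality is immediate.
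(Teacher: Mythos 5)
Your proof is correct, and it takes a genuinely different route from the paper's. The paper's argument invokes the ``insize'' characterization of $\delta$-hyperbolic triangles (Bridson--Haefliger III.H.1.17): it places a point $y_1\in[x,p]$ at distance $\gprod{x}{q}{p}$ from $p$, a companion point $y_2\in[p,q]$, uses $d(y_1,y_2)\leq 4\delta$, and then combines $d(x,y_1)=d(x,p)-d(y_1,p)$ with the projection inequality $d(x,p)\leq d(x,y_2)$. You bypass the auxiliary point on $[x,p]$ entirely: you put a single point $r\in[p,q]\subseteq\tau$ at distance $\gprod{x}{q}{p}$ from $p$, apply the four-point condition once, and use the projection property directly on $r$. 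The computation is clean and, as you observe, actually yields the sharper bound $\gprod{x}{q}{p}\leq 2\delta$ (and hence $d(x,q)\geq d(x,p)+d(p,q)-4\delta$), because your argument only pays a single $2\delta$ rather than the $4\delta$ insize constant. Both approaches hinge on the same geometric idea---if the Gromov product is large, some point of $\tau$ between $p$ and $q$ would be closer to $x$ than $p$, contradicting that $p$ is a nearest point---so the difference is in which hyperbolicity axiom does the bookkeeping. One small slip worth fixing: the substitution you describe ($x\leftrightarrow x$, $y\leftrightarrow r$, $p\leftrightarrow p$, $q\leftrightarrow q$) in the paper's stated four-point inequality would place $d(x,q)+d(r,p)$ on the left-hand side, not $d(x,r)+d(p,q)$; the inequality you actually wrote down is a correct instance of the four-point condition (take $x\leftrightarrow x$, $q\leftrightarrow r$, $y\leftrightarrow p$, $p\leftrightarrow q$), so the proof stands, but the labeling should be corrected.
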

\begin{proof}
  Consider the geodesic triangle with vertices $x$, $p$, and $q$, as
  on the left in \autoref*{fig:geometry}.
  Following~\cite[III.H.1.17]{Bridson99}, we may choose points
  $y_1\in\seg{x}{p}$ and $y_2\in\seg{p}{q}$ such that
  $\dist{y_1}{p}=\gprod{x}{q}{p}$ and $\dist{y_1}{y_2}\leq 4\delta$.
  We have $\dist{x}{y_1} = \dist{x}{p}-\dist{y_1}{p}$.  Furthermore,
  $\dist{x}{p}\leq\dist{x}{y_2}$ since $p$ is a projection of $x$ to
  $\tau$.  Combining these estimates and using the triangle
  inequality, we get
  \begin{align*}
    \dist{x}{p} & \leq \dist{x}{y_2}\\
    & \leq \dist{x}{y_1} + \dist{y_1}{y_2}\\
    & \leq \dist{x}{p} - \dist{y_1}{p} + 4\delta.
  \end{align*}
  Therefore $\dist{y_1}{p}\leq 4\delta$.  Finally, recall that $d(y_1,
  p) = \gprod{x}{q}{p}$.
\end{proof}

\begin{figure}[htb]
  \begin{center}
  \includegraphics[width=4.5in]{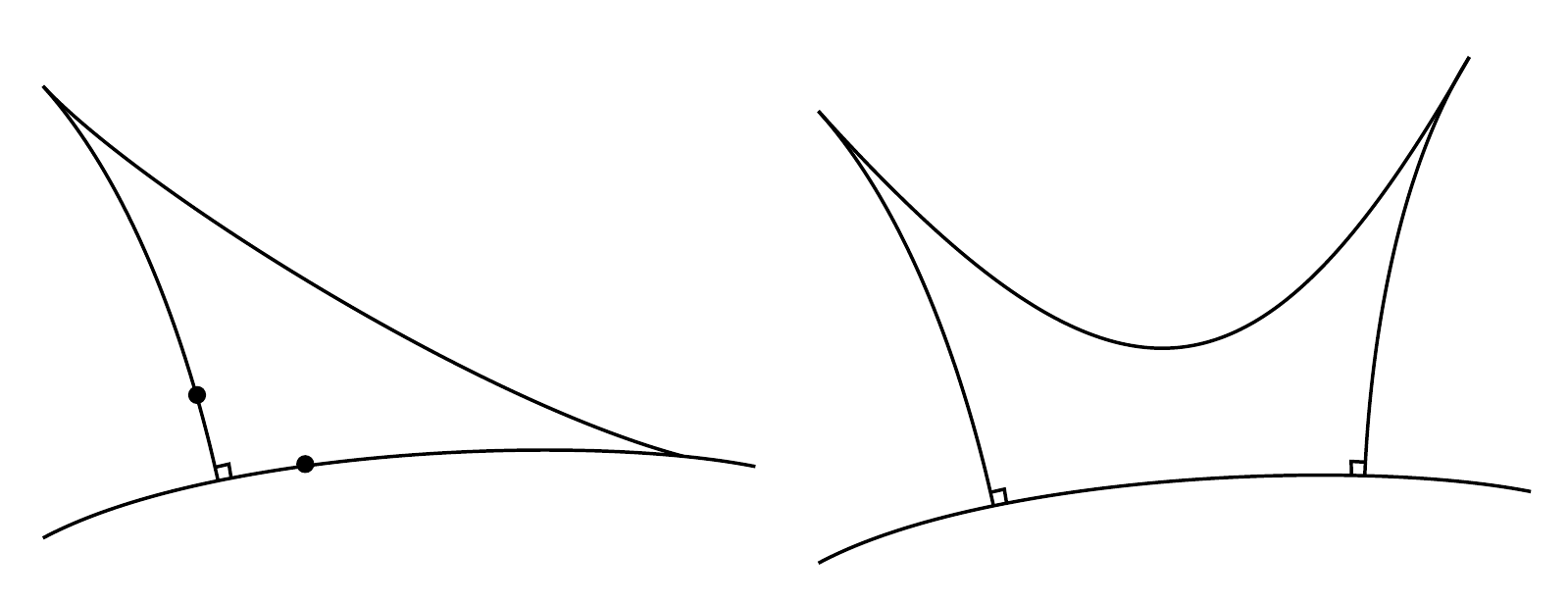}
  \put(-165,100){$x$}
  \put(-18,114){$y$}
  \put(-122,12){$p$}
  \put(-44,17){$q$}
  \put(-325,105){$x$}
  \put(-282,16){$p$}
  \put(-190,21){$q$}
  \put(-297,41){$y_1$}
  \put(-261,19){$y_2$}
  \caption{Triangle and quad used in \autoref*{side-estimate} 
    and \autoref*{quad-estimate}.}\label{fig:geometry}
  \end{center}
\end{figure}

\begin{lemma}\label{quad-estimate}
  Let $p,q\in X$ be projections of points $x, y\in X$ to a geodesic
  $\tau$, and suppose that $d(p,q)>9\delta$.  Then
  \[ \dist{x}{y}\geq\dist{x}{p} + \dist{p}{q} + \dist{q}{y} -
  18\delta. \]
\end{lemma}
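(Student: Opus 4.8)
The plan is to run the same Gromov-product argument as in \autoref*{side-estimate}, but now applied to the geodesic quadrilateral with vertices $x$, $p$, $q$, $y$ (as on the right in \autoref*{fig:geometry}), so that we pick up the contributions of both projections. First I would apply \autoref*{side-estimate} twice: once to the projection $p$ of $x$, giving
\[ \dist{x}{q}\geq\dist{x}{p}+\dist{p}{q}-8\delta, \]
and once to the projection $q$ of $y$, giving
\[ \dist{y}{p}\geq\dist{y}{q}+\dist{p}{q}-8\delta. \]
So it remains to combine a lower bound on $\dist{x}{q}$ and $\dist{y}{p}$ into a lower bound on $\dist{x}{y}+\dist{p}{q}$, and for this I would invoke the four-point ($\delta$--hyperbolicity) inequality stated in the excerpt,
\[ \dist{x}{q}+\dist{y}{p}\leq\max\{\dist{x}{y}+\dist{p}{q},\ \dist{x}{p}+\dist{y}{q}\}+2\delta. \]

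The heart of the argument is ruling out the ``wrong'' maximand: I must show that $\dist{x}{p}+\dist{y}{q}$ cannot realize the maximum, so that the first term does. Suppose for contradiction that $\dist{x}{q}+\dist{y}{p}\leq\dist{x}{p}+\dist{y}{q}+2\delta$. Feeding in the two displayed lower bounds for $\dist{x}{q}$ and $\dist{y}{p}$, the terms $\dist{x}{p}$ and $\dist{y}{q}$ cancel on both sides, leaving $2\dist{p}{q}-16\delta\leq 2\delta$, i.e.\ $\dist{p}{q}\leq 9\delta$, contradicting the hypothesis $d(p,q)>9\delta$. Therefore the four-point inequality reads
\[ \dist{x}{q}+\dist{y}{p}\leq\dist{x}{y}+\dist{p}{q}+2\delta. \]
Now substitute the two lower bounds for $\dist{x}{q}$ and $\dist{y}{p}$ into the left side: the left side is at least $\dist{x}{p}+\dist{y}{q}+2\dist{p}{q}-16\delta$, so
\[ \dist{x}{p}+\dist{y}{q}+2\dist{p}{q}-16\delta\leq\dist{x}{y}+\dist{p}{q}+2\delta, \]
and rearranging gives $\dist{x}{y}\geq\dist{x}{p}+\dist{p}{q}+\dist{q}{y}-18\delta$, as claimed.

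The only genuine obstacle is the case analysis in the four-point inequality — making sure the hypothesis $d(p,q)>9\delta$ is exactly what is needed to eliminate the unwanted maximand, and keeping the $\delta$ bookkeeping honest (the constant $8\delta$ from \autoref*{side-estimate} appears twice, plus the $2\delta$ from hyperbolicity, yielding $18\delta$). Everything else is a mechanical substitution. One small point to be careful about: \autoref*{side-estimate} is stated for a point and its projection with $q$ ranging over the geodesic, so applying it with the roles of the two projected points swapped is legitimate since $q\in\tau$ and $p\in\tau$ are both points of the geodesic $\tau$.
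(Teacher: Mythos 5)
Your proposal is correct and follows essentially the same route as the paper's proof: apply \autoref*{side-estimate} twice to obtain lower bounds on $\dist{x}{q}$ and $\dist{y}{p}$, invoke the four-point $\delta$-hyperbolicity inequality, use the hypothesis $d(p,q)>9\delta$ to rule out the second maximand, and then rearrange. The bookkeeping of constants also matches the paper.
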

\begin{proof}
  Consider the quadrilateral with vertices $x, y, p$, and $q$ as on the
  right of \autoref*{fig:geometry}.  By~\autoref*{side-estimate} we
  have that both $\gprod{x}{q}{p}$ and $\gprod{y}{p}{q}$ are not
  greater than $4\delta$.  By the definition of the Gromov product,
  this yields
  \[ \dist{x}{q}  \geq \dist{x}{p} + \dist{p}{q} - 8\delta \]
  and 
  \[ \dist{y}{p}  \geq \dist{y}{q} + \dist{q}{p} - 8\delta. \]
  Combining these two inequalities we see that
  \begin{equation}\label{eq:quad-inequality}
    \dist{x}{p}+\dist{y}{q} + 2\dist{p}{q} - 16\delta 
           \leq \dist{x}{q} + \dist{y}{p}.
  \end{equation}
  Since $X$ is $\delta$--hyperbolic, we have
  \[ \dist{x}{q} + \dist{y}{p} \leq \max\{\dist{x}{y} + \dist{p}{q},
  \dist{x}{p} + \dist{y}{q}\} + 2\delta. \] 
  If the second argument of
  $\max$ is greater, we obtain using the two previous inequalities
  $\dist{p}{q}\leq 9\delta$, a contradiction.  Therefore we must have 
  \[ \dist{x}{q} + \dist{y}{p} \leq \dist{x}{y} + \dist{p}{q} +
  2\delta, \]
  and so we obtain the conclusion by using~\autoref*{eq:quad-inequality}.
\end{proof}


The \defn{Gromov boundary} $\boundary X$ of a metric space $X$ is the
set of equivalence classes of geodesic rays, modulo the relation that
two rays are equivalent if they stay a bounded distance apart.
Let $\phi$ be an isometry of $X$.  Such isometries are classified into
three types: \defn{elliptic} isometries have bounded orbits,
\defn{parabolic} isometries fix one point in $\boundary X$, and
\defn{hyperbolic} isometries fix two points in $\boundary X$.
Isometries of curve complexes induced from surface automorphisms are
always either elliptic or hyperbolic~\cite{CassonBleiler}, and so we
assume from now on that $\phi$ has this property.

Define the \defn{translation length} of $\phi$ as
\[ \translen{\phi} = \inf_{x\in X}d(x, \phi(x)), \]
and the \defn{axis} of $\phi$, $A_{\phi}$, to be the set of all points
$x\in X$ for which
\[ d(x, \phi(x)) \leq \max\{\translen{\phi}, 10\delta\}. \] 

This definition roughly follows~\cite{Coulon}, as does the proof
of~\autoref*{translation-distance} below. Note that while the term
``axis'' often suggests hyperbolic isometries in which the axis is a
neighborhood of a bi-infinite geodesic, the same definition is valid
for any isometry. In particular, we will allow $\phi$ to be induced on
$\ACm(F)$ by a periodic or reducible automorphism of $F$ as well as by
a pseudo-Anosov.

Elliptic and hyperbolic isometries realize their translation
lengths~\cite{Gromov}, and so $A_{\phi}$ is a nonempty set.  Note also
that $A_{\phi}$ is closed. As we will see from
\autoref*{exists-point-far-from-axis}, $A_{\phi}$ is not the whole
space. To prove this we need the following Lemma, which shows that if
a point $x$ has large translation distance under $\phi$, then $x$ is
far from $A_{\phi}$, and vice versa.

\begin{lemma}\label{translation-distance}
  For every $x\in X$,
  \[ 2d(x, A_{\phi}) + \translen{\phi} - 18\delta\leq d(x, \phi(x)) 
     \leq 2d(x, A_{\phi}) + \translen{\phi}. \]
\end{lemma}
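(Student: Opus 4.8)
The plan is to prove the two inequalities separately, in both cases by relating the segment from $x$ to $\phi(x)$ to a nearest point $a\in A_\phi$ and its image $\phi(a)$, using the fact that $\phi$ is an isometry so $d(a,\phi(a))\le\max\{|\phi|,10\delta\}$.

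For the upper bound, let $a$ be a point of $A_\phi$ realizing $d(x,A_\phi)$. Since $\phi$ is an isometry, $d(\phi(x),\phi(a))=d(x,a)=d(x,A_\phi)$, and $d(a,\phi(a))\le\max\{|\phi|,10\delta\}$ by definition of the axis. The triangle inequality along the path $x\to a\to\phi(a)\to\phi(x)$ gives $d(x,\phi(x))\le 2d(x,A_\phi)+\max\{|\phi|,10\delta\}$. To get the cleaner bound with $|\phi|$ in place of the max, I would note that if $|\phi|\ge 10\delta$ there is nothing more to do, and if $|\phi|<10\delta$ then $A_\phi$ is a $10\delta$-coarse version of the min-set; in that regime one picks $a$ to be a point actually realizing (or nearly realizing) the infimum $|\phi|$ together with a projection argument. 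Honestly I expect a small loss here, so the realistic target of this direction is $d(x,\phi(x))\le 2d(x,A_\phi)+\max\{|\phi|,10\delta\}$, which I believe is what is actually used; if the sharper statement is wanted one massages constants, but the structure is the same.

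For the lower bound, which I expect to be the main obstacle, the idea is: pick a geodesic $\tau$ through (a point of) $A_\phi$ that is coarsely $\phi$-invariant — e.g. a segment $[a,\phi(a)]$ for $a\in A_\phi$, extended, or a quasi-axis geodesic — and let $p$ be a projection of $x$ to $\tau$ and $q=\phi(p)$ a projection of $\phi(x)$ to $\tau$ (up to bounded error, since $\phi$ moves $\tau$ a bounded amount). If $d(p,q)>9\delta$, then \autoref*{quad-estimate} applied to the points $x,\phi(x)$ and their projections $p,q$ gives
\[ d(x,\phi(x))\ \ge\ d(x,p)+d(p,q)+d(q,\phi(x))-18\delta. \]
Now $d(x,p)\ge d(x,A_\phi)$ and $d(q,\phi(x))=d(p,x)\ge d(x,A_\phi)$ since $\phi$ is an isometry, while $d(p,q)=d(p,\phi(p))\ge|\phi|$. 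Combining yields $d(x,\phi(x))\ge 2d(x,A_\phi)+|\phi|-18\delta$. The remaining case $d(p,q)\le 9\delta$ must be handled separately: there $p$ is essentially fixed by $\phi$ up to $9\delta$, so $d(p,\phi(p))\le 9\delta\le 10\delta$ shows $p\in A_\phi$, hence $d(x,A_\phi)\le d(x,p)$; one then checks directly from the triangle inequality and $d(p,\phi(p))\le 9\delta$ that $d(x,\phi(x))\ge 2d(x,p)-9\delta\ge 2d(x,A_\phi)+|\phi|-18\delta$ (using $|\phi|\le 9\delta$ in this branch, or adjusting the additive constant). The delicate points are choosing $\tau$ so that it passes appropriately near $A_\phi$ and is coarsely $\phi$-invariant, and controlling the error in "$q$ is a projection of $\phi(x)$ to $\tau$" — this is where most of the $\delta$-bookkeeping lives, and where I would be most careful to make sure the stated constant $18\delta$ is actually achievable rather than some larger multiple.
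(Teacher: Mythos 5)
Your upper bound argument matches the paper's, and you correctly flag the issue that a projection $y$ of $x$ to $A_\phi$ only satisfies $d(y,\phi(y)) \le \max\{\translen{\phi}, 10\delta\}$ rather than $\le \translen{\phi}$; the paper's proof elides this, so that is a small defect in the source, not in your reading of it.

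For the lower bound, however, your proposal is missing the two short sub-claims that make the argument close, and the part you do write down does not hold up as stated. The paper takes $y$ to be a projection of $x$ to $A_\phi$ and $\tau = [y,\phi(y)]$, and then proves \emph{exactly} the two facts you leave as ``delicate points'': (a) $[y,\phi(y)] \subseteq A_\phi$. This is a one-line triangle-inequality computation: for $y'$ on this geodesic, $d(y',\phi(y')) \le d(y',\phi(y)) + d(\phi(y),\phi(y')) = d(y',\phi(y)) + d(y,y') = d(y,\phi(y))$, so $y'$ has translation distance no larger than $y$. This is what makes $y$ a genuine (not coarse) projection of $x$ to $\tau$ and $\phi(y)$ a genuine projection of $\phi(x)$ to $\tau$, and also what licenses the inequality $d(x,p) \ge d(x,A_\phi)$ that you invoke without justification. (b) $d(y,\phi(y)) \ge 10\delta$. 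Since $y$ is the closest point of the closed set $A_\phi$ to $x$, every $z \in [x,y)$ lies outside $A_\phi$ and so has $d(z,\phi(z)) > \max\{\translen{\phi},10\delta\}$; letting $z \to y$ gives $d(y,\phi(y)) \ge 10\delta$. This is precisely the $10\delta$ built into the definition of $A_\phi$, and it is what renders \autoref*{quad-estimate} applicable unconditionally, so that the branch $d(p,q)\le 9\delta$ you try to handle never occurs.

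That second branch in your write-up is also where the argument actually breaks: the claim $d(x,\phi(x)) \ge 2d(x,p) - 9\delta$ does not follow from the triangle inequality and $d(p,\phi(p)) \le 9\delta$. Triangle inequality in that configuration gives only the \emph{upper} bound $d(x,\phi(x)) \le 2d(x,p) + 9\delta$; to get a lower bound you need the four points $x, p, \phi(p), \phi(x)$ to be coarsely collinear in that order, which is exactly what the quad estimate supplies when $d(p,\phi(p))$ is large, and which can fail when it is small. Likewise, invoking a ``quasi-axis'' or a ``coarsely $\phi$-invariant geodesic'' would introduce additional additive error that would likely push you above $18\delta$; the whole point of restricting attention to the single segment $[y,\phi(y)]$ is that $\phi(y)$ is an \emph{exact} projection of $\phi(x)$ to it, so no such error appears.
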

\begin{proof}
  Given $x\in X$, the second inequality is a consequence of the
  triangle inequality: let $y$ be a projection of $x$ to $A_\phi$, so
  that $d(x,y) = d(x, A_{\phi})$. Then $d(x, y) = d(x, A_{\phi})$ and
  $d(y, \phi(y)) \leq \translen{\phi}$, so

  \begin{align*}
    d(x, \phi(x)) &\leq d(x, y) + d(y, \phi(y)) 
    + d(\phi(x), \phi(y)) \\
    & \leq 2d(x, A_{\phi}) + \translen{\phi}.
  \end{align*}  

  To prove the first inequality, we may assume that $x\not\in
  A_{\phi}$.  Let $y$ be a projection of $x$ to $A_{\phi}$.  We claim
  that $[y,\phi(y)]\subseteq A_{\phi}$.  To prove this, choose a point
  $y'\in [y,\phi(y)]$.  Because $\phi$ is an isometry,
  \begin{align*}
    d(y', \phi(y')) &\leq d(y', \phi(y)) + d(\phi(y), \phi(y'))\\
    & = d(y, y') + d(y', \phi(y))\\
    & = d(y, \phi(y)).
  \end{align*}
  Since $y\in A_{\phi}$, $y'$ is also in this set.  Finally, note that
  $\phi(y)$ is a projection of $\phi(x)$ to the geodesic segment
  $[y, \phi(y)]$.

  We claim that $\dist{y}{\phi(y)}\geq 10\delta$.  This is because no
  point $z\neq y$ in $[x,y]$ lies in $A_{\phi}$.  Therefore
  $\dist{z}{\phi(z)} > \max\{\translen{\phi}, 10\delta\}$, and taking
  the limit as $z$ approaches $y$ we obtain the claim.

  By~\autoref*{quad-estimate},
  \begin{align*}
    d(x, \phi(x)) &\geq d(x, y) + d(y, \phi(y)) + d(\phi(x), \phi(y))
    - 18\delta\\
    & = 2d(x,y) + d(y, \phi(y)) - 18\delta\\
    & \geq 2d(x, A_{\phi}) + \translen{\phi} - 18\delta.
  \end{align*}
\end{proof}


The following corollary is immediate:

\begin{corollary}\label{exists-point-far-from-axis}
  If $\phi \co F \to F$ is not the identity then for every $C > 0$,
  there is a vertex $x$ in $\ACm(F)$ such that $\dm(x, A_{\phi}) > C$.
\end{corollary}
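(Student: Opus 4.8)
The plan is to deduce this directly from the two-sided estimate in \autoref*{translation-distance}, which says that for every $x\in X$ one has
\[ d(x,\phi(x)) \leq 2d(x,A_\phi) + \translen{\phi}. \]
Rearranging, $d(x,A_\phi) \geq \tfrac{1}{2}\bigl(d(x,\phi(x)) - \translen{\phi}\bigr)$. So it suffices to produce, for any given $C>0$, a vertex $x$ of $\ACm(F)$ whose translation distance $d(x,\phi(x))$ is arbitrarily large — specifically larger than $2C + \translen{\phi}$. In other words, the corollary reduces to showing that the translation distances $d(x,\phi(x))$ are unbounded over vertices $x$, under the hypothesis that $\phi$ is not the identity.

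First I would treat the case where $\phi$ is hyperbolic as an isometry of $\ACm(F)$ (equivalently $\phi$ is pseudo-Anosov, or more generally has a pseudo-Anosov piece). In that case $\phi$ has positive translation length and fixes two points on the Gromov boundary $\boundary X$; taking any bi-infinite quasi-geodesic through the axis and pushing a vertex $x$ far out along it toward one of the fixed points, the translation distance $d(x,\phi(x))$ stays comparable to the translation length, which is not what we want — so instead I would move $x$ \emph{perpendicularly} away from the axis: \autoref*{translation-distance} already tells us that $d(x,\phi(x))$ grows linearly in $d(x,A_\phi)$, so this case is actually circular unless we independently exhibit vertices far from $A_\phi$. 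The cleaner route, therefore, is the following: choose any vertex $v$ that is \emph{not} fixed by $\phi$ — such a vertex exists precisely because $\phi\ne \mathrm{id}$, since a simplicial automorphism of $\ACm(F)$ fixing every vertex is the identity (here one uses that $F$ is not a sphere, disk, annulus, or pair of pants, so $\ACm(F)$ has enough vertices to detect $\phi$, e.g.\ via Alexander-method type rigidity, or simply because $\phi$ induces a nontrivial action on $H_1$ or on the set of isotopy classes of curves). Then iterate: consider $v, \phi(v), \phi^2(v), \dots$. If these translation distances $d(\phi^n(v),\phi^{n+1}(v)) = d(v,\phi(v))$ were all bounded by some $B$ — but wait, they are all \emph{equal} to $d(v,\phi(v))$ since $\phi$ is an isometry, so iteration alone does not help.

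The correct argument is: the function $x\mapsto d(x,\phi(x))$ is unbounded on $X$ if and only if $\phi$ is not elliptic-with-a-global-fixed-point-structure of bounded diameter, i.e.\ if and only if $\phi$ does not fix a point. When $\phi$ is elliptic it has bounded orbits but need not fix a vertex; however, an elliptic isometry of a $\delta$-hyperbolic space still has orbits of bounded diameter, so $d(x,\phi(x)) \le \mathrm{diam}(\phi\text{-orbit of }x)$ — this would \emph{bound} the translation distances, contradicting what we want. This is the main obstacle, and the resolution is that \autoref*{theorem:main}'s relevant monodromies, and in fact the corollary's hypothesis "$\phi$ is not the identity" together with $F$ not being a sphere/disk/annulus/pants, force $\phi$ to have \emph{infinite order} in the mapping class group — no, that is false too (finite-order $\phi$ occur). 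Let me instead argue directly: I claim that for any $\phi\ne\mathrm{id}$ and any $C$, there is a vertex at distance $>C$ from the fixed set, hence (by the lemma) far from $A_\phi$. Concretely, pick a curve or arc $a$ with $\phi(a)\ne a$, so $d(a,\phi(a)) = k \ge 1$. Now choose a vertex $x$ that is "far from $a$ in the direction away from $\phi$" — formally, use that $\AC(F)$ (hence $\ACm(F)$) has infinite diameter, pick $x$ with $d(x,a)$ and $d(x,\phi(a))$ both large; then by the triangle inequality $d(x,\phi(x)) \ge d(\phi(a),\phi(x)) - d(\phi(a), x) $... this also needs care. The honest statement is that the corollary follows from \autoref*{translation-distance} \emph{together with} the standard fact that a non-identity mapping class acting on the (marked) arc and curve complex has unbounded translation-distance function, which in turn follows because the action is by isometries on an infinite-diameter space and $\phi$ has no fixed vertex when $\phi\neq\mathrm{id}$ (Alexander method / complex rigidity for our surfaces), so for any $n$ the iterate behavior combined with the thin-quadrilateral estimate \autoref*{quad-estimate} forces some vertex into the complement of the $C$-neighborhood of $A_\phi$. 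I expect the write-up to simply invoke \autoref*{translation-distance} and note that unboundedness of $d(\cdot,\phi(\cdot))$ is exactly the (well-known) statement that $\phi\ne\mathrm{id}$ acts nontrivially, so $A_\phi\ne X$ with the quantitative bound $d(x,A_\phi)\ge \tfrac12(d(x,\phi(x))-\translen\phi)\to\infty$; the one point requiring justification is why $d(\cdot,\phi(\cdot))$ is unbounded, and the hard part is pinning that down cleanly for periodic and reducible $\phi$, not just pseudo-Anosov ones.
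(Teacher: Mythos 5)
Your reduction is exactly the paper's: invert the second inequality of \autoref*{translation-distance} to get $\dm(x,A_\phi)\geq\tfrac12(\dm(x,\phi(x))-\translen{\phi})$, so it suffices to exhibit $x$ with $\dm(x,\phi(x))>2C+\translen{\phi}$, i.e.\ to show the translation-distance function is unbounded when $\phi\neq\mathrm{id}$. But you explicitly concede that you do not have a clean argument for that unboundedness, and indeed none of the routes you sketch closes the gap: iterating $\phi$ gives constant translation distance; pushing along the axis is circular, as you note; and the triangle-inequality manipulation with a far-away $x$ degenerates because you have no control on $d(x,\phi(a))$ versus $d(\phi(a),\phi(x))$ independently. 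So the proposal correctly locates the crux but does not supply it.

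The paper fills this hole with a citation. It invokes~\cite[Corollary 1.1]{RafiSchleimer}: if an automorphism $\phi$ of $F$ induces an isometry of the curve complex of $F$ for which $d(x,\phi(x))$ is uniformly bounded over all vertices $x$, then $\phi$ is the identity map. This is transported to $\ACm(F)$ via the quasi-isometry $p$ of~\autoref*{lem:marked-complex-qi}, and the contrapositive then gives the needed vertex. One more comment: your worry that an elliptic (e.g.\ periodic) $\phi$ might have \emph{bounded} translation-distance function confuses two notions. Elliptic means each orbit $\{\phi^n(x)\}_{n}$ has bounded diameter, which is a statement at fixed $x$; it does not bound $x\mapsto d(x,\phi(x))$ as $x$ ranges over the space. (Compare a rotation of $\mathbb{H}^2$ about a point $p$: orbits are bounded circles, yet $d(x,\phi(x))$ grows without bound as $d(x,p)\to\infty$.) So the elliptic case is not a counterexample; it is precisely what Rafi--Schleimer's result rules out, and you should cite it rather than try to reprove it.
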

\begin{proof}
  As noted in~\cite[Corollary 1.1]{RafiSchleimer}, if an automorphism
  $\phi$ of $F$ induces an isometry of the curve complex of $F$ for
  which there is a universal bound on $d(x, \phi(x))$ then $\phi$ must
  be the identity map.   As the curve complex is quasi-isometric to
  $\ACm(F)$ this argument carries over directly to our case.  Since we
  have assumed $\phi$ is not the identity, the contrapositive implies
  that there must be an $x$ such that $\dm(x, \phi(x)) > 2C +
  \translen{\phi}$. Then by the second inequality in
  \autoref*{translation-distance},
  \[ C < \frac{\dm(x, \phi(x)) - \translen{\phi}}{2} 
     \leq \dm(x, A_\phi). \]
\end{proof}

Before finishing this section, we will need a Lemma relating the
geometric intersection number between two curves in $\ACm(F)$ to their
distance in the complex. An analogous result for loops in the curve
complex is well known, but when arcs are involved the proof becomes
slightly more complex.

Let $x$ and $y$ be properly embedded $1$--manifolds in a surface $F$.
Denote the \defn{geometric intersection number} between $x$ and $y$,
the minimum number of intersections among all $1$--manifolds properly
isotopic to $x$ and $y$, by $\geomint{x}{y}$.  Since we are working in
the marked arc and curve complex, we consider only isotopy disjoint
from the marked points.

\begin{lemma}\label{lem:arc-intersection-number}
  Let $F$ be a connected, orientable surface of genus $g>0$ with one
  or more marked boundary components.  Let $x,y\in\ACm(F)$ be
  represented by arcs in $F$, and assume that $\geomint{x}{y}>0$.
  Then
  \[ \dm(x, y) \leq \geomint{x}{y} + 1. \]
\end{lemma}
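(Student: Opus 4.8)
The idea is the standard "surgery along intersections" argument used for the curve complex, adapted to handle arcs with marked endpoints. Given arcs $x$ and $y$ realizing $\iota(x,y) = n > 0$ intersections, I want to produce a path of length at most $n+1$ in $\ACm(F)$ from $x$ to $y$. The plan is to build intermediate vertices by resolving the intersections of $x$ with $y$ a few at a time, so that at each stage the new arc or curve is disjoint from the previous one.

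First I would set up the surgery operation carefully. Pick a representative of $y$ meeting $x$ minimally, and orient $x$. Walking along $x$, let $p_1, \dots, p_n$ be the successive points of $x \cap y$. For each $i$, consider the sub-arc of $x$ from its starting boundary point up to (and slightly past) $p_i$; concatenating this with a sub-arc of $y$ running from $p_i$ out to a boundary component of $F$, and pushing off to remove the intersection at $p_i$, produces a properly embedded arc $z_i$ in $F$. Since $F$ has genus $g > 0$ (so it is not planar and $\ACm(F)$ has edges), I need to argue that after a small isotopy $z_i$ can be taken to be essential and disjoint from the marked points $m$; this is where the genus hypothesis and the nontriviality $\iota(x,y)>0$ get used — a trivial or boundary-parallel $z_i$ can be traded for an essential one because the surface is rich enough, or one shows directly that a surgered arc cannot be inessential without contradicting minimality of the intersection. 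By construction $z_i$ is disjoint from the portion of $x$ beyond $p_i$ and from the portion of $y$ before $p_i$, so consecutive surgered arcs $z_i$ and $z_{i+1}$ (and also $z_1$ with $x$, and $z_n$ with $y$ up to one extra step) have disjoint representatives.

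The core claim is then: $x = z_0$, $z_1, z_2, \dots, z_n$, and $y$ (possibly with $y$ itself appearing as $z_{n+1}$ after the last surgery, or reached in one more edge) forms an edge-path in $\ACm(F)$. Here $z_i$ and $z_{i-1}$ are disjoint because $z_i$ is built from an initial segment of $x$ together with an arc of $y$ emanating from $p_i$, while $z_{i-1}$ uses only $x$ up to $p_{i-1}$ and $y$ up to $p_{i-1}$; choosing the $y$-tails to run to the boundary without crossing each other (possible after isotopy since they can be taken within a collar of $y$) keeps them disjoint. Chaining these edges gives $\dm(x,y) \le n+1$, the "$+1$" accounting for the final transition from $z_n$ to $y$.

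The main obstacle is the essentiality and disjointness-from-$m$ bookkeeping: a priori a surgered arc $z_i$ could be boundary-parallel, could bound a bigon with $x$ or $y$ (contradicting minimal position), or could be forced to hit a marked point. I expect to handle this by first isotoping $x$ and $y$ into minimal position \emph{relative to} the marked points, then observing that any inessential $z_i$ would create an innermost disk or bigon that could be used to reduce $\iota(x,y)$, contradicting our choice; and the marked points can be avoided since each $z_i$'s boundary lies near $\partial x \cup \partial y$, which we arranged to be disjoint from $m$. The genus $g > 0$ hypothesis guarantees $\ACm(F)$ is the connected, positive-dimensional complex where "disjoint representatives bound an edge" is the right notion, so the path we construct is genuinely a path in the complex as defined.
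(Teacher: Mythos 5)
There is a genuine gap in the construction of the arcs $z_i$. You define $z_i$ by concatenating the initial segment of $x$ up to $p_i$ with a subarc of $y$ running from $p_i$ out to a boundary component. But nothing in the construction ensures that this $y$-tail has interior disjoint from $x$: the tail will, in general, pass through other intersection points $p_j$, some of which lie on the initial segment of $x$ you just used (so $z_i$ fails to be embedded) and some of which lie on the rest of $x$ (so the pushed-off $z_i$ is not disjoint from $x$). The paper sidesteps this by choosing the $y$-subarc $a$ to be a longest subarc of $y$ from an endpoint to a point of $x\cap y$ whose \emph{interior is disjoint from $x$}; only then does surgery produce an embedded arc disjoint from $x$. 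With your choice of tail, the disjointness claims for $z_i$ with $x$ and for $z_i$ with $z_{i+1}$ do not go through, and the chain you want to build falls apart from the first link.

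The second gap is the essentiality argument, which you flag as the main obstacle but do not actually close. You propose to argue that an inessential $z_i$ "would create an innermost disk or bigon that could be used to reduce $\iota(x,y)$." That is exactly the step that fails in the marked complex: the disk witnessing inessentiality can contain the marked point $m$, in which case it does \emph{not} provide an isotopy of $y$ in $\ACm(F)$ (isotopies must stay disjoint from $m$), so no contradiction with minimal position arises. This is precisely why the paper forms \emph{two} candidate arcs $z_1=a\cup x_1$ and $z_2=a\cup x_2$ from the two sides of $x$ at $p$, and observes that if one of them cobounds a disk with $\partial F$ that disk must contain $m$, which forces the other candidate to be essential. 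Your single-candidate-per-step construction has no fallback when the candidate is inessential, and the heuristic you sketch does not supply one. A clean fix is to adopt the paper's route: induct on $\iota(x,y)$, produce one new essential arc $z$ disjoint from $x$ with $\iota(z,y)<\iota(x,y)$ using the two-candidate dichotomy, and let the induction assemble the path rather than trying to build the whole chain by hand.
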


Note that unlike the analogous result in the curve complex, the distance 
is bounded by the intersection number rather than the log of the 
intersection number.

\begin{proof}
  Isotope $x$ and $y$ to minimize $|x\cap y|$.  If $\geomint{x}{y}=0$,
  the result holds.  Otherwise, assume $\geomint{x}{y}=n>0$ and suppose 
  that the result holds for all arcs $\alpha, \beta\in\ACm(F)$ with
  $\geomint{\alpha}{\beta}<n$.  We will construct an arc $z\in\ACm(F)$
  with $\geomint{x}{z}=0$ and $\geomint{z}{y}\leq n-1$, which gives
  the result.

  Let $a$ be a longest subarc of $y$ connecting an endpoint of
  $\boundary y$ to a point $p$ of $x\cap y$ whose interior is disjoint
  from $x$.  The point $p$ divides $x$ into two arcs, $x_1$ and
  $x_2$.  We form two new properly embedded arcs $z_1=a\cup x_1$ and
  $z_1=a\cup x_2$.  Note that we may isotope $z_1$ and $z_2$ to be
  disjoint from $x$.  Furthermore, both $\geomint{z_1}{y}$ and
  $\geomint{z_2}{y}$ are less than $n$.  We must show that one of
  $z_1$ and $z_2$ is an essential arc in $F$; this will be our $z$.

  Suppose that $z_i$, $i=1$ or $2$, cobounds a disk with part of
  $\boundary F$.  Then this disk must contain the marked point $m$,
  for otherwise we could use the disk to reduce $|x\cap y|$.
  Therefore $z_j$ cannot also cobound a disk with part of $\boundary
  F$, where $\set{i,j}=\set{1,2}$.
\end{proof}

\begin{lemma}\label{lem:distance-intersection-number}
  Let $F$ be a connected, orientable surface of genus $g>0$ with one
  or more marked boundary components.  Let $x, y\in\ACm(F)$ be vertices
  that are represented by an arc and a \scc in $F$, respectively.
  Then
  \[ \dm(x, y)\leq \geomint{x}{y} + 1. \]
\end{lemma}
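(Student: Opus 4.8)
The plan is to reduce Lemma~\ref{lem:distance-intersection-number} to the arc-versus-arc case already handled in Lemma~\ref{lem:arc-intersection-number}. Given the arc $x$ and the simple closed curve $y$ with $\geomint{x}{y}=n$, isotope representatives to realize this minimal intersection number. If $n=0$ then $\dm(x,y)\le 1$ and we are done, so assume $n>0$. The idea is to produce from $y$ an \emph{arc} $y'$ that is disjoint from $y$, is essential in $F$, avoids the marked points $m$, and has controlled intersection with $x$; then we can apply Lemma~\ref{lem:arc-intersection-number} to the pair $x,y'$ and use the triangle inequality $\dm(x,y)\le\dm(x,y')+\dm(y',y)\le\dm(x,y')+1$.

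The key construction of $y'$ is as follows. Let $a$ be a shortest subarc of $x$ with both endpoints on $y$ and interior disjoint from $y$ (such an arc exists because $n>0$; it is one of the ``outermost'' bigon-free subarcs of $x$ cut along $y$). Surger $y$ along $a$: the arc $a$ together with one of the two subarcs of $y$ between its endpoints forms a properly embedded arc in $F$ (note $a$ runs to $\boundary F$ only through its ambient position; more precisely, I take a small pushoff so that $y' $ is an arc obtained by banding $y$ to a nearby boundary point, or — cleaner — I replace $y$ by the boundary of a regular neighborhood of $y\cup a$, which has one or two components, at least one of which is a nonseparating or essential arc after discarding boundary-parallel or inessential pieces). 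In either formulation, $y'$ is disjoint from $y$, so $\geomint{y'}{y}=0$ and hence $\dm(y',y)\le 1$; and $y'$ is built from $y$ and a single subarc $a$ of $x$, so $\geomint{x}{y'}\le n-1$, since the subarc $a$ of $x$ is no longer available to intersect $y'$ (it lies in $y'$ or on its boundary) while the rest of $x$ meets $y'$ in at most the points it met $y$.

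To make $y'$ a legitimate vertex of $\ACm(F)$ we must check it is essential in $F$ and can be isotoped off the marked points $m$. For the marked points: $y$ is disjoint from $m$ and $a\subseteq x$ is disjoint from $m$, so a neighborhood of $y\cup a$ can be taken disjoint from $m$, hence so can $y'$. For essentiality: as in the proof of Lemma~\ref{lem:arc-intersection-number}, if a candidate component of $y'$ were inessential — bounding a disk, or cobounding a disk with a subarc of $\boundary F$ — that disk would either allow us to reduce $|x\cap y|$ (contradicting minimality) or would have to contain a marked point; a parity or counting argument on the (at most two) components of the surgered curve shows that at least one component survives as an essential arc disjoint from $m$. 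This is the step I expect to require the most care, and it is exactly the analogue of the last paragraph of the proof of Lemma~\ref{lem:arc-intersection-number}.

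Once $y'$ is in hand with $\geomint{x}{y'}\le n-1$, I apply Lemma~\ref{lem:arc-intersection-number} to conclude $\dm(x,y')\le\geomint{x}{y'}+1\le n$, and then
\[ \dm(x,y)\le\dm(x,y')+\dm(y',y)\le n+1 = \geomint{x}{y}+1, \]
which is the claim. (One should also handle the degenerate possibility $\geomint{x}{y'}=0$ separately, where $\dm(x,y')\le 1$ suffices directly.) The main obstacle is purely the bookkeeping in the surgery: ensuring a good choice of outermost subarc $a$, verifying essentiality of the surgered arc without introducing marked-point obstructions, and confirming the intersection count drops by at least one.
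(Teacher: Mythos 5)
Your high-level strategy matches the paper's: produce an essential arc $y'$ disjoint from $y$ (hence $\dm(y,y')\le 1$) with $\geomint{x}{y'}\le\geomint{x}{y}-1$, then apply \autoref*{lem:arc-intersection-number} and the triangle inequality. However, your construction of $y'$ has a genuine gap. You take a subarc $a$ of $x$ with \emph{both} endpoints on $y$ and interior disjoint from $y$, and then form $y'$ either as $a$ together with a subarc of $y$, or as a component of $\partial N(y\cup a)$. But $y$ is a closed curve in the interior of $F$, and such an $a$ also lies entirely in the interior of $F$ (its endpoints are points of $x\cap y$, not of $\partial F$). Consequently $y\cup a$ and any regular neighborhood of it lie in $\operatorname{int} F$, so every boundary component of $N(y\cup a)$ is a simple closed curve, never a properly embedded arc. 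Likewise $a\cup(\text{subarc of }y)$ is a closed loop, not an arc. The claim that ``at least one component is an essential arc'' is therefore false on its face, and the construction as stated produces only circles. Note also that when $\geomint{x}{y}=1$ there is no subarc of $x$ with both endpoints on $y$ at all, so the construction does not even get started in that case; your appeal to an ``outermost'' subarc contradicts the ``both endpoints on $y$'' requirement, since an outermost piece of $x$ cut along $y$ has one endpoint on $\partial F$.

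The correct construction — which you gesture toward in the parenthetical about ``banding $y$ to a nearby boundary point'' but then set aside — is the one the paper uses: pick the intersection point $p\in x\cap y$ closest along $x$ to one of the boundary endpoints $e$ of $x$, and let $x_0$ be the subarc of $x$ from $p$ to $e$. Slide $y$ along (a neighborhood of) $x_0$ so that near $p$ the curve $y$ is split open and its two new ends run parallel to $x_0$ down to $\partial F$, landing on either side of $e$. This yields a properly embedded arc $y'$. It is disjoint from $y$ (we pushed it off), it avoids $m$ (since $x$, and hence $e$ and $x_0$, are disjoint from $m$), and it meets $x$ in one fewer point (the intersection at $p$ is removed and no new ones are created since $x_0\subseteq x$). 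Essentiality of $y'$ follows because $y'$ together with the short boundary subarc between its two endpoints is isotopic to $y$, so a compressing bigon or boundary-parallelism for $y'$ would give one for $y$. With this $y'$ in hand your final triangle-inequality computation is exactly the paper's.
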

\begin{proof}
  By an isotopy of $y$ supported in a neighborhood of $x$, we obtain a
  properly embedded arc $y'$ with $\geomint{y}{y'} = 0$ and
  $\geomint{x}{y'}\leq\geomint{x}{y} - 1$.  The result follows by
  applying~\autoref*{lem:arc-intersection-number}.
\end{proof}

\section{Surfaces compatible with the fibration}
We now switch from considering vertices of $\ACm(F)$ as loops in an
abstract surface to thinking about them as knots in a $3$--manifold
with an open book decomposition.  We find a special surface
$S\subseteq M$ which allows us to give a bound on the translation
distance of the isometry on $\ACm(F)$ induced by the monodromy $\phi$.
Let $K$ be the knot defined by a loop in $F^0$, as before, and write
$\L=K\cup L$.

Recall that we say the link $\L$ is in \defn{bridge position} with
respect to $\Sigma$ if $\Sigma$ divides $M$ into two compression
bodies $H^1$ and $H^2$ such that each arc of $H^i\cap\L$ is trivial in
$H^i$, $i=1,2$.  Say that such a bridge position is \defn{minimal} if
$|H^i\cap\L|$ is minimized over all surfaces isotopic to $\Sigma$. In
this case we define $b_{\Sigma}(\L) = |H^i\cap\L|$.

Note that if $\Sigma$ is in bridge position with respect to $\L$ then
it is in bridge position with respect to each of $K$ and $L$,
independently. Isotope $\Sigma$ so that it is a minimal bridge
surface with respect to $K$. Then isotope it further so that it has
minimal bridge number with respect to $\L = K \cup L$, subject to the
constraint that it remains a minimal bridge surface with respect to
$K$.  We call this a \defn{minimal $K$--bridge position}.

We say that $(\Sigma, \L)$ is \defn{weakly reducible} if there are
disjoint essential disks $D_1\subseteq H^1_{\L}$ and $D_2\subseteq
H^2_{\L}$.  If $(\Sigma, \L)$ is not weakly reducible, we say that it
is \defn{strongly irreducible}.  We use a characterization of
bridge surfaces, due in its original form to Hayashi and
Shimokawa~\cite{Hayashi01}, and reformulated by Taylor and
Tomova~\cite{TaylorTomova13}.  It says that weakly reducible
splittings have one of several properties, or that the exterior of the
link contains a special essential surface.  Let $(\Sigma, \L)$ be a
bridge splitting.  The splitting is \defn{stabilized} if there are
compressing disks on either side of $\Sigma$ that intersect exactly
once.  The splitting is \defn{boundary stabilized} if it is obtained
from a bridge splitting $(\Sigma', \L)$ by amalgamating with the
standard splitting of a neighborhood of a boundary component of $M$.
The splitting is \defn{perturbed} if there are bridge disks (disks
whose boundary consists of the union of an arc of $H^i\cap\L$ and an
arc in $\Sigma$) on opposite sides of $\Sigma$ meeting in a single
point.  Finally, we say that a component $L_0$ of $\L$ is
\defn{removable} if $L_0$ is isotopic to a core of either $H^1$ or
$H^2$.

\begin{theorem}\label{bridge-sfce-classification}
  Let $M$ be a compact, orientable $3$--manifold containing a link
  $\L$, and suppose that $M_{\L}$ is irreducible and that no sphere in
  $M$ intersects $\L$ exactly once.  If $\Sigma$ is a weakly reducible
  bridge surface for $\L$, then one of the following holds:
  \begin{enumerate}
  \item $\Sigma$ is stabilized, boundary stabilized, perturbed, or a
    component of $\L$ is removable, or
  \item $M$ contains an essential meridional surface $S$ such that
    $\chi(\Sigma_{\L})\leq \chi(S_{\L})$.  Furthermore, $S$ is a thin
    surface in a generalized bridge splitting obtained by
    untelescoping $\Sigma$.
  \end{enumerate}
\end{theorem}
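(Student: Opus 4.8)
This is a repackaging of theorems of Hayashi--Shimokawa~\cite{Hayashi01} and Taylor--Tomova~\cite{TaylorTomova13}, and the proof is theirs; I only sketch the shape of the argument, which runs the Scharlemann--Thompson thin-position machinery inside the link exterior.

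The plan is first to introduce \emph{generalized bridge splittings} of $(M,\L)$: alternating sequences of \emph{thick} surfaces $\Sigma_1,\dots,\Sigma_n$ and \emph{thin} surfaces $S_1,\dots,S_{n-1}$, where $\Sigma_i$ is built from $S_{i-1}$ by attaching $1$--handles with $\L$ left in trivial position, $S_i$ is built from $\Sigma_i$ by compressing along disjoint disks, and each $\Sigma_i$ is a genuine bridge surface for the submanifold of $M$ it cuts out; amalgamating the entire sequence returns $(\Sigma,\L)$. Weak reducibility of $\Sigma$ says exactly that the disjoint disks $D_1\subseteq H^1_{\L}$, $D_2\subseteq H^2_{\L}$ can be used to untelescope $(\Sigma,\L)$ nontrivially, i.e.\ to produce such a sequence with $n\geq 2$. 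Among all generalized bridge splittings obtained from $\Sigma$ this way, I would fix one of minimal \emph{width}: the multiset $\{-\chi((\Sigma_i)_{\L})\}$, compared lexicographically after sorting into decreasing order.

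The next step is to analyze a thin surface $S=S_i$ of this minimal-width splitting. Because $S$ arises from $\Sigma$ by compressions along disks disjoint from $\L$, and each such compression raises Euler characteristic by $2$ without changing $|\Sigma\cap\L|$, one gets $\chi(\Sigma_{\L})\leq\chi(S_{\L})$ --- the inequality in~(2). So it is enough to show that either some thin surface of the minimal splitting is essential in $M_{\L}$ (this is~(2)), or $\Sigma$ falls into one of the four degenerate cases of~(1). Assuming no thin surface is essential: a thin sphere bounding a ball disjoint from $\L$, or a thin sphere hitting $\L$ once, is ruled out by the standing hypotheses on $M_{\L}$ and on spheres meeting $\L$; a thin surface parallel to a component of $\partial M$ gets absorbed into the boundary, and induction on $n$ shows that this configuration forces $\Sigma$ to be boundary stabilized; and if $S$ is compressible or $\partial$--compressible, I would perform that reduction, re-amalgamate, and invoke the standard analysis of how the reducing disk meets the thick surfaces bounding $S$ --- the upshot being that either the width strictly drops (contradicting minimality) or the reduction cancels against a handle of an adjacent thick surface, which when unravelled means $\Sigma$ is stabilized or perturbed, or that a component of $\L$ threads a single handle and is thus isotopic to a core, i.e.\ removable.

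I expect the main obstacle to be this last step --- turning ``no essential thin surface plus minimal width'' into one of the four named degeneracies of $\Sigma$. Every reduction must be carried out in $M_{\L}$, so the trivial $\L$--arcs have to be tracked throughout, and the \emph{perturbed} and \emph{removable} outcomes are genuinely link-theoretic (a perturbation is a pair of opposite bridge disks; a removable component is a core arc) with no analogue in the Heegaard-surface version, so isolating them is precisely the casework of~\cite{Hayashi01,TaylorTomova13}. The remaining ingredients --- existence of the nontrivial untelescoping, monotonicity of $\chi$ under compression, and the exclusion of sphere thin surfaces --- are routine given those references.
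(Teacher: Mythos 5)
The paper's own proof of this statement is a one-sentence citation: it is precisely~\cite[Corollary 9.4]{TaylorTomova13} applied with $\Gamma=\emptyset$ and $T=\L$, together with the observation that two of the outcomes in that corollary---\emph{meridional stabilization} and \emph{boundary meridional stabilization}---cannot occur when $\Gamma$ is empty, which is why they do not appear in the statement. Your proposal instead sketches the Scharlemann--Thompson/Hayashi--Shimokawa/Taylor--Tomova machinery that lies \emph{behind} that citation. As a description of the shape of that argument it is reasonable: the untelescoping into a generalized bridge splitting, the choice of one of minimal width, monotonicity of $\chi$ under compression giving $\chi(\Sigma_{\L})\le\chi(S_{\L})$, and the dichotomy between an essential thin surface and one of the degeneracies are all the right ingredients, and you correctly flag that the hard work---tracking the trivial $\L$--arcs through each reduction and extracting the perturbed and removable cases---is precisely the content of the cited papers and not something to reprove here.

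Two things are worth pointing out. First, you are implicitly proving a stronger, standalone result; the paper does not need to do that, and deducing the stated theorem from \cite[Corollary 9.4]{TaylorTomova13} is strictly easier and what is actually intended. Second, the one genuinely nontrivial step in the paper's deduction---explaining why the conclusion list has only four degenerate cases rather than the six in Taylor--Tomova's corollary---does not appear in your sketch. If you set $\Gamma=\emptyset$ at the outset and develop the argument for a pure link, the meridional stabilization cases never arise, so your outline is not wrong; but if you instead cite the general corollary (which is the cleaner route), you need to say explicitly that meridional and boundary meridional stabilization are vacuous when there is no spinal graph $\Gamma$. Beyond that, your proposal and the paper's proof rest on the same underlying theorem and are consistent with one another.
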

\begin{proof}
  This is~\cite[Corollary 9.4]{TaylorTomova13} where we have taken
  $\Gamma=\emptyset$ and $T=\L$.  Note that meridional stabilization
  and boundary meridional stabilization cannot occur as $\Gamma$ is
  empty.
\end{proof}

We wish to give a lower bound on the bridge number of $K$ with respect
to the splitting $\Sigma$. Note the following Lemma, which follows
directly from the definitions:

\begin{lemma}
  Suppose that $(\Sigma', \L)$ is obtained by destabilizing or boundary
  destabilizing the $K$--minimal bridge splitting $(\Sigma, \L)$.  Then
  $b_{\Sigma}(K)\geq b_{\Sigma'}(K)$.  Furthermore, a minimal bridge
  splitting $(\Sigma, \L)$ is never perturbed.
\end{lemma}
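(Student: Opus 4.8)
The plan is to prove the two assertions of the lemma separately, in both cases reducing to the definition of $b_\Sigma$ as the \emph{minimal} number of bridge arcs on one side, subject to the constraint that $\Sigma$ is a minimal bridge surface with respect to $K$ alone.

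First I would handle the inequality $b_\Sigma(K) \geq b_{\Sigma'}(K)$. Here $(\Sigma',\L)$ is obtained from $(\Sigma,\L)$ by a single destabilization or boundary destabilization. The key observation is that destabilization and boundary destabilization, as moves on the pair $(\Sigma,\L)$, do not increase the number of bridge arcs of $\L$ with any single component; in particular they do not increase $|H^i\cap K|$, and they preserve the property of being a bridge surface for $K$. So if $\Sigma$ is realized in minimal $K$--bridge position with $b_\Sigma(K) = |H^i\cap K|$, then after the (boundary) destabilization the new surface $\Sigma'$ still puts $K$ in bridge position and has at most as many $K$--bridges; taking $b_{\Sigma'}(K)$ to be the minimum over surfaces isotopic to $\Sigma'$, we get $b_{\Sigma'}(K) \leq |H^i\cap K| = b_\Sigma(K)$. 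The only thing to check carefully is that a (boundary) destabilization of $\Sigma$ genuinely produces a bridge surface for $\L$ again — i.e., that the move can be performed so that the trivial arc structure of $\L$ is respected — but this is exactly the setup in which these moves are defined, so it is immediate from the definitions as claimed.

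For the second assertion — that a minimal bridge splitting $(\Sigma,\L)$ is never perturbed — I would argue by contradiction. Suppose $(\Sigma,\L)$ is perturbed, so there are bridge disks $E^1\subseteq H^1$, $E^2\subseteq H^2$ on opposite sides with $|E^1\cap E^2|=1$, the single intersection point lying on $\L$. The standard consequence of a perturbation is that one can isotope $\Sigma$ across the union $E^1\cup E^2$ to cancel a pair of bridge arcs, one on each side, producing a new bridge surface $\Sigma''$ isotopic to $\Sigma$ with $|H''^i\cap\L| = |H^i\cap\L| - 1$. This contradicts the minimality of $b_\Sigma(\L)$ among surfaces isotopic to $\Sigma$. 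The one subtlety worth noting is whether the cancelled arc belongs to $K$ or to $L$: if it belongs to $L$, then the move also reduces $|H^i\cap K|$ weakly (in fact leaves it unchanged) and reduces $|H^i\cap\L|$, contradicting minimality of the $K$--bridge position at the $\L$ level; if it belongs to $K$, the move strictly reduces $|H^i\cap K|$, contradicting the minimality of $\Sigma$ as a bridge surface for $K$. Either way we reach a contradiction.

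I expect the main (minor) obstacle to be bookkeeping: the definition of minimal $K$--bridge position is a two-stage minimization (first minimize $|H^i\cap K|$, then minimize $|H^i\cap\L|$ subject to that), so each destabilization or perturbation-cancellation move must be checked against both stages to make sure it does not merely leave one quantity fixed while the other increases. Once one is careful that all of these elementary surgery moves are nonincreasing on the relevant intersection counts, the lemma follows directly from the definitions, exactly as the statement asserts. I would present this as a short paragraph rather than a formal multi-step argument.
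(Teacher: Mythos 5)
The paper states this lemma without proof, remarking only that it ``follows directly from the definitions,'' so there is no written argument in the paper to compare yours against. Your proposal supplies the missing details correctly and takes the obvious intended route: since destabilizing and boundary destabilizing are supported away from $\L$, they leave the bridge arcs untouched, so the destabilized surface $\Sigma'$ exhibits at most $b_\Sigma(K)$ bridges for $K$, giving $b_{\Sigma'}(K)\le b_\Sigma(K)$. For the perturbation claim, you correctly observe that unperturbing is an isotopy of $\Sigma$ in $M$ reducing $|H^i\cap\L|$ by one, and you correctly split into cases according to whether the cancelled pair of bridge arcs lies on $K$ or on $L$; the former contradicts stage one of the two-stage minimization (minimal $K$--bridge number), the latter contradicts stage two (minimal $\L$--bridge number subject to stage one). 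This is exactly the subtlety the lemma's ``minimal $K$--bridge position'' definition is designed to accommodate, and you handled it. Your note that the two cancelled arcs must lie on the same component of $\L$ (because they share an endpoint on $\L$) is the small but necessary observation that makes the case split well defined. In short, the proposal is correct and is essentially the implicit proof the authors had in mind.
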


By this lemma and~\autoref*{bridge-sfce-classification} we may assume
that either $(\Sigma, \L)$ is strongly irreducible, $M_{\L}$ contains
an essential surface $S_{\L}$ of Euler characteristic greater than or
equal to that of $\Sigma_{\L}$, a component of $L$ is removable, or
$K$ is removable.  We will show that either $\Sigma$ or $S$ can be
isotoped to intersect the fibers $F^s$ in a very controlled manner in
all of these cases.

As noted in~\cite{TaylorTomova13}, if a component $R$ of a link $\L$ is
removable with respect to a Heegaard surface $\Sigma$ then $\Sigma$
can be isotoped so that $R$ is a core of one of the compression bodies
bounded by $\Sigma$. In this case, removing an open regular
neighborhood of $R$ turns the handlebody into a compression body. Thus
after the isotopy, we find a Heegaard surface $\Sigma'$ for the
complement $M_R$ such that $\Sigma'$ is isotopic in $M$ to
$\Sigma$. We consider two cases: when $R$ is a component of $L$
and when $R = K$.

If $R$ is a component of $L$ then the restriction of the bundle map
$\pi\co M \setminus L \to S^1$ to $M_R \setminus L$ is a surface
bundle such that $K$ is contained in a fiber.  We can
apply~\autoref*{bridge-sfce-classification} to the isotoped Heegaard
surface $\Sigma'$ for $M_R$, which is a bridge surface for $\L
\setminus R$. If $R = K$ is the removable component, then we no longer
have a surface bundle structure for $M_R$, but we can still find a
Heegaard surface $\Sigma'$ for $M_R$.

If a component of $\L$ is again removable with respect to $\Sigma'$
then we can repeat this process. We will eventually find a sublink
$L'$ of $\L$ and a Heegaard surface $\Sigma'$ for $M_{L'}$ that is a
bridge surface for $\L \setminus L'$ in which no component is
removable. Note that we may find $L' = \L$, in which case $\Sigma'$ is
just a Heegaard surface and $\L \setminus L'$ is empty.

If $K$ is a component of $L'$ then we let $L'' = L' \setminus K$. Then
$\Sigma'$ is a Heegaard surface for $M_{L''}$ such that $\Sigma'$ is a
bridge surface for $L \setminus L''$ and $K$ is a core of one of the
compression bodies bounded by $\Sigma'$. Thus we have the following:

\begin{lemma}\label{lem:bridge-taxonomy}
  If $\Sigma$ is a bridge surface for $\L$ then there is either
  \begin{enumerate}
  \item a surface $S \subseteq M$ such that $S_{\L}$ is essential and
    $\chi(S_{\L}) \geq \chi(\Sigma_{\L})$,
  \item a sublink $L' \subseteq L$ and a Heegaard surface $\Sigma'$ for
    $M_{L'}$ such that $\Sigma'$ is a strongly irreducible bridge surface for $\L
    \setminus L'$ and $\chi(\Sigma_{\L}') \geq \chi(\Sigma_{\L})$, or
  \item a sublink $L'' \subseteq L$ and a Heegaard surface $\Sigma'$
    for $M_{L''}$ such that $\chi(\Sigma_{\L}') \geq
    \chi(\Sigma_{\L})$, $\Sigma'$ is a strongly irreducible bridge
    surface for $L \setminus L''$ and $K$ is a core of a compression
    body bounded by $\Sigma'$.
  \end{enumerate}
\end{lemma}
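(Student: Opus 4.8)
The statement is a bookkeeping corollary of the classification \autoref*{bridge-sfce-classification}, obtained by applying it repeatedly while drilling out removable components of $\L$; the real content is arranging the iteration so that it terminates in one of the three advertised outcomes. First I would normalize $\Sigma$: isotope it into a minimal $K$--bridge position, so that by the destabilization lemma above the splitting is never perturbed and (boundary) destabilizations do not increase $b_\Sigma(K)$.

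Then I would run the following loop, maintaining a sublink $L'\subseteq L$ of components drilled so far together with a flag recording whether $K$ has been isotoped to a core, and the invariants: $\Sigma'$ is a Heegaard surface for $M_{L'}$; it is a bridge surface for $(L\setminus L')\cup K$ when the flag is off and for $L\setminus L'$ (with $K$ a core of a compression body bounded by $\Sigma'$) when it is on; and $\chi(\Sigma'_{\L})\geq\chi(\Sigma_{\L})$. If $\Sigma'$ is strongly irreducible, stop. Otherwise apply \autoref*{bridge-sfce-classification} to $\Sigma'$ inside $M_{L'}$. If it produces an essential meridional surface $S$ with $\chi(S_{\L})\geq\chi(\Sigma'_{\L})\geq\chi(\Sigma_{\L})$, we are in conclusion (1) and stop. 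If $\Sigma'$ is stabilized or boundary stabilized, replace it by its (boundary) destabilization: this keeps it a bridge surface for the same link, does not increase $b_\Sigma(K)$, strictly decreases $g(\Sigma')$, and does not decrease $\chi(\Sigma'_{\L})$; then loop. The perturbed alternative cannot occur. If some component $R$ is removable, isotope $\Sigma'$ so that $R$ is a core of one of the two compression bodies (so $R$ becomes disjoint from $\Sigma'$); if $R$ is a component of $L$, delete an open neighborhood of $R$, so that the compression body containing the core becomes a compression body again and $\Sigma'$ is now a Heegaard surface for $M_{L'\cup R}$, isotopic in $M$ to $\Sigma'$, still a bridge surface for the remaining components, with $\chi(\Sigma'_{\L})$ unchanged and $M_{L'\cup R}\setminus L$ still a surface bundle over $S^1$ with $K$ in a fiber; set $L':=L'\cup R$ and loop. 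If instead $R=K$, do not drill: simply turn the flag on (recording that $K$ is now a core of a compression body bounded by $\Sigma'$) and loop.

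To see the loop terminates, order configurations lexicographically by $(g(\Sigma'),\,n)\in\Z_{\geq 0}^2$, where $n$ is the number of components of $\L$ with respect to which $\Sigma'$ is a bridge surface: a (boundary) destabilization strictly decreases $g(\Sigma')$, while drilling a component of $L$ and flagging $K$ each strictly decrease $n$ and leave $g(\Sigma')$ fixed. Hence after finitely many steps we stop, either inside conclusion (1), or with a strongly irreducible bridge surface $\Sigma'$ that is a Heegaard surface for $M_{L'}$ and satisfies $\chi(\Sigma'_{\L})\geq\chi(\Sigma_{\L})$, where $L'\subseteq L$. If the flag is off, $\Sigma'$ is a strongly irreducible bridge surface for $(L\setminus L')\cup K=\L\setminus L'$ and we read off conclusion (2) with sublink $L'$; if it is on, set $L'':=L'$ and read off conclusion (3). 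The step I expect to be the main obstacle is exactly the bookkeeping around the case $R=K$: after isotoping $K$ to a core one must be entitled to re-enter \autoref*{bridge-sfce-classification} with the link $L\setminus L'$ inside $M_{L'}$ while keeping the invariant that $K$ remains a core under subsequent destabilizations and drillings, and one must check that every operation preserves that theorem's hypotheses --- irreducibility of the ambient link exterior and the nonexistence of a sphere meeting the link exactly once. These persist here because each step only passes to a knot exterior inside, or drills binding components out of, an open book of a closed $3$--manifold.
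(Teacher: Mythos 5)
Your proposal takes essentially the same approach as the paper: the lemma is stated there as a summary of the preceding discussion, which likewise iterates \autoref*{bridge-sfce-classification}, destabilizes, and drills out removable components until the classification lands in one of the three advertised outcomes, with bookkeeping for whether $K$ has been isotoped to a core of a compression body. You have made the loop and its termination (the lexicographic complexity function, the flag in place of drilling and un-drilling $K$) a bit more explicit than the paper, but the argument is the same.
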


Suppose that $M_{\L}$ contains an essential surface $S_{\L}$ such
that $\chi(\Sigma_{\L})\leq \chi(S_{\L})$ as in case one
of~\autoref*{lem:bridge-taxonomy}.
Isotope $S$ so that $\partial S_L \cap \partial F^s_L$ is
minimal for every $s\in S^1$. Moreover, by a general position
argument, we can isotope $S$ so that the restriction of $\pi$ to $S_L$
is Morse and $0$ is a regular value.

\begin{lemma}\label{thin-sfce}
  Let $S_{\L}$ be an essential meridional surface in $M_{\L}$ such that
  $\pi|_{S_L}$ is Morse and $\partial S_L \cap \partial F^s_L$ is
  minimal for every $s\in S^1$.  Then for every regular value $s$ of
  $\pi|_{S_L}$, each arc of $S_L \cap F^s_L$ is essential in both
  surfaces and each \scc of intersection is either essential in both
  surfaces or trivial in both surfaces.
\end{lemma}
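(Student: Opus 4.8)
The plan is to run the standard innermost-disk and outermost-arc arguments, drawing on three inputs: $S_{\L}$ is essential in $M_{\L}$ (hence incompressible and $\partial$--incompressible) by hypothesis; the drilled page $F^s_{\L}$ is likewise essential in $M_{\L}$; and $|\partial S_{\L}\cap\partial F^s_{\L}|$ has been minimized. Since $s$ is a regular value of $\pi|_{S_{\L}}$, the surfaces $S_{\L}$ and $F^s_{\L}$ meet transversely in $M_{\L}$, including along $\partial M_{\L}$, so $S_{\L}\cap F^s_{\L}$ is a finite disjoint union of arcs and simple closed curves, properly embedded in each surface. I would first record that $F^s_{\L}$ is incompressible and $\partial$--incompressible in $M_{\L}$: the page $F^s$ is a fiber of the surface bundle $M_L\to S^1$ and is therefore essential in $M_L$ because $F$ is not a disk, annulus, or pair of pants; since distinct pages are disjoint away from $L$ and $K$ is essential in its own page, a compressing or $\partial$--compressing disk for $F^s_{\L}$ in $M_{\L}$ can be promoted (using irreducibility of $M_{\L}$) to a compressing or $\partial$--compressing disk for the undrilled fiber $F^s$ in $M_L$, a contradiction.

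For a simple closed curve $c$ of $S_{\L}\cap F^s_{\L}$, suppose it is essential in one of the two surfaces and inessential in the other. If $c$ bounds a disk in $F^s_{\L}$ while being essential in $S_{\L}$, then an innermost such $c$ bounds a disk with interior disjoint from $S_{\L}$, which is a compressing disk for $S_{\L}$ — impossible. The case where $c$ is boundary-parallel in one surface and essential in the other is handled in the same spirit, using the corresponding innermost clean annulus together with the incompressibility and essentiality of the surfaces. The symmetric argument, now using incompressibility of $F^s_{\L}$, rules out $c$ being essential in $F^s_{\L}$ and inessential in $S_{\L}$. Hence each simple closed curve of intersection is essential in both surfaces or trivial in both.

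For the arcs, suppose first that a component $\alpha$ is inessential in $F^s_{\L}$. Choosing $\alpha$ outermost in $F^s_{\L}$, it cuts off a disk $D\subseteq F^s_{\L}$ with $\partial D=\alpha\cup\beta$, $\beta\subseteq\partial F^s_{\L}\subseteq\partial M_{\L}$, and $\interior D\cap S_{\L}=\emptyset$. If $\alpha$ is essential in $S_{\L}$, then $D$ is a $\partial$--compressing disk for $S_{\L}$, contradicting $\partial$--incompressibility. If $\alpha$ is inessential in $S_{\L}$, then pushing $S_{\L}$ across $D$ is an isotopy of $S$ that deletes the two endpoints of $\alpha$ from $\partial S_{\L}\cap\partial F^s_{\L}$; since the page boundaries $\partial F^t$ are mutually parallel in the tori $\partial M_{\L}$, this isotopy can be arranged not to raise $|\partial S_{\L}\cap\partial F^t_{\L}|$ for any other $t$, contradicting minimality. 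So every arc of $S_{\L}\cap F^s_{\L}$ is essential in $F^s_{\L}$. Finally, if some arc $\alpha$ were inessential in $S_{\L}$, choose it outermost in $S_{\L}$, cutting off a disk $D'\subseteq S_{\L}$ with $\interior D'\cap F^s_{\L}=\emptyset$; since $\alpha$ is essential in $F^s_{\L}$ by the previous step, $D'$ is a $\partial$--compressing disk for $F^s_{\L}$, contradicting its $\partial$--incompressibility. Hence every arc is essential in both surfaces, which completes the argument.

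The hardest parts, I expect, are the two pieces of bookkeeping rather than the main line of argument: establishing that the drilled page $F^s_{\L}$ is genuinely incompressible and $\partial$--incompressible in $M_{\L}$ — this is where essentiality of $K$ in $F$, disjointness of distinct pages, and irreducibility of $M_{\L}$ all enter — and choosing the isotopy across $D$ in the arc case so that it strictly lowers $|\partial S_{\L}\cap\partial F^s_{\L}|$ without raising the corresponding count for any other page, so that it truly contradicts the simultaneous minimality hypothesis. Both are routine but require care; everything else is the usual incompressible-surface bookkeeping.
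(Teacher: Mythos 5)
Your overall strategy (innermost disk, outermost arc, plus minimality of the boundary count) matches the paper's, but there are two related gaps centered on the distinction between drilling along $L$ versus along $\L = K\cup L$.

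First, the lemma's conclusion is about arcs and circles of $S_L\cap F^s_L$ being essential in $S_L$ and $F^s_L$, while your argument is carried out for $S_{\L}\cap F^s_{\L}$ and concludes essentiality in $S_{\L}$ and $F^s_{\L}$. These are not the same: for $s\neq 0$ the intersection sets coincide and $F^s_{\L}=F^s_L$, but ``essential in $S_{\L}$'' is strictly weaker than ``essential in $S_L$,'' since a boundary-compressing half-disk for an arc in $S_L$ is allowed to pass through $K$ and hence is destroyed when you drill out $N(K)$. The downstream application (in the proof of the distance bound) genuinely needs essentiality in $S_L$, so the weaker conclusion does not suffice. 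The paper avoids this by running the two directions asymmetrically: a curve or arc trivial in $S_L$ gives a compressing or $\partial$-compressing disk for $F^s_L$ inside $M_L$ (where $K$ is allowed to sit), contradicting essentiality of the fiber in $M_L$; while a curve or arc trivial in $F^s_L$ for $s\neq 0$ gives a disk automatically disjoint from $K$, which is then a compressing or $\partial$-compressing disk for $S_{\L}$ in $M_{\L}$, contradicting the hypothesis. This also makes your claim that $F^s_{\L}$ is essential in $M_{\L}$ unnecessary; the paper only needs that the honest fiber $F^s_L$ is essential in $M_L$, which is immediate.

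Second, you do not handle $s=0$, the one level where $K\subseteq F^0$ and the disjointness-from-$K$ trick fails (and where $F^0_{\L}\neq F^0_L$, so your $\L$-drilled framing actively breaks). Your proposed ``promotion'' of a compressing or $\partial$-compressing disk for $F^0_{\L}$ in $M_{\L}$ to one for $F^0$ in $M_L$ is the step you flag as delicate, and indeed for a $\partial$-compression whose boundary arc runs along $\partial N(K)$ it is not clear how to push the disk back across $N(K)$ without new analysis. The paper sidesteps all of this with a one-line continuity observation: since $0$ is a regular value of $\pi|_{S_L}$, any offending disk at $s=0$ persists to nearby regular values $s>0$, which contradicts the already-established case. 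You should add this reduction rather than try to prove essentiality of $F^0_{\L}$.
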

Note that the conclusion of the lemma applies to surfaces in the
exterior of $L$ as opposed to $\L$.
\begin{proof}
  In this proof and the sequel we often think of $S^1$ as the interval
  $[0,1]$ with its endpoints identified.  Suppose then that $s\in
  (0,1)$.  Every arc or \scc of $F^s_L\cap S_L$ which is trivial in
  $S_L$ must also be trivial in $F^s_L$ since $F^s_L$ is essential in
  $M_L$.  If there is a trivial \scc or arc of $F^s_L\cap S_L$ in
  $F^s_L$ that is essential in $S_L$, an innermost such \scc or
  outermost such arc bounds or cobounds a disk $D$ in $F^s_L$ disjoint
  from $K$.  Therefore $S_{\L}$ is compressible or
  $\boundary$--compressible, a contradiction.  If such a disk exists
  when $s=0$, there must be some $s>0$ for which there is a similar
  disk in $F^s_L$.

  Finally, note that no arc of intersection can be trivial in both
  surfaces since this would define an isotopy reducing $\boundary
  S_L\cap \boundary F^s_L$.  

\end{proof}

Suppose now that we are in case two or three
of~\autoref*{lem:bridge-taxonomy}, so that $\Sigma'$ is a Heegaard
surface for $M_{L'}$ or $M_{L''}$.  Let $\Lhat$ be $L\setminus L'$ in
case two or $L\setminus L''$ in case three.  Then $\Sigma'$ is either
a strongly irreducible bridge surface for $\Lhat\cup K$ or $\Sigma'$
is a strongly irreducible bridge surface for $\Lhat$ and
$K$ is a core of one of the compression bodies bounded by $\Sigma'$.
We will show that $\Sigma'$ behaves in much the same way as the
essential surface of \autoref*{thin-sfce}.

Let $H_-$ and $H_+$ be the compression bodies bounded by $\Sigma'$ and
let $G_-$, $G_+$ be spines for these compression bodies. If $\Sigma'$
is a bridge surface for $\Lhat\cup K$ then we can extend each of $G_-$
and $G_+$ to contain a single vertex in each arc $\Lhat \cap H_-$ and
$\Lhat \cap H_+$, respectively. Otherwise, if $K$ is a core of one of
$H_-$, $H_+$ then we can choose $G_-$ and $G_+$ so that $K$ is
contained in one of the spines and each arc of $\Lhat \cap H_-$ and
$\Lhat \cap H_+$ contains a vertex of $G_-$ or $G_+$, respectively.

Let $h$ be a sweep-out subordinate to the bridge surface $\Sigma'$,
i.e.\ a function $h\co M\to I$ such that $h^{-1}(0) = G_-$, $h^{-1}(1)
= G_+$, $h^{-1}(\frac{1}{2}) = \Sigma'$, and for each $t \in (0,1)$,
the level surface $\Sigma^t = h^{-1}(t)$ is isotopic to $\Sigma'$ by
an isotopy transverse to $\Lhat\cup K$.  Let $H^t_-$ and $H^t_+$ be
the compression bodies bounded by $\Sigma^t$.

Consider the map $\Phi\co M\setminus L\to S^1\times I$ given by
sending $x\mapsto (\pi(x), h(x))$.  Each point $(s, t)$ in the
cylinder $S^1\times I$ represents a pair of surfaces $F^s$ and
$\Sigma^t$, as described above.  The \defn{graphic} is the subset of
the cylinder consisting of all points $(s, t)$ where $F^s$ and
$\Sigma^t$ are tangent.  We may assume that $\Phi$ is \defn{generic}
in the sense that it is stable (see~\cite{KobayashiSaeki}
and~\cite{Johnson09}) on the complement of $G^+\cup G^-$, each arc
$\set{s}\times I$ contains at most one vertex of the graphic, and each
circle $S^1\times\set{t}$ contains at most one vertex of the graphic.
Vertices in the interior of the graphic are valence four (crossings)
and valence two (cusps).  By general position of $G^+\cup G^-$, the
graphic is incident to the boundary of the cylinder in only a finite
number of points, and each vertex in the boundary has valence one or
two.

For a regular value $(s, t)$ of $\Phi$, say that $F^s$ is
\defn{essentially above} $\Sigma^t$ if there is a component of
$F^s\cap \Sigma^t$ that is an essential arc or circle in $\Sigma^t$
but bounds a compressing or $\boundary$--compressing disk for
$\Sigma^t$ contained in $H^t_-$.  (Note that this compressing disk may
not be contained in $F^s$, though it will often be parallel to a disk
in $F^s$.)  Similarly, say that $F^s$ is \defn{essentially below}
$\Sigma^t$ if there is a component of $F^s\cap\Sigma^t$ that is an
essential arc or circle in $\Sigma^t$ but bounds a compressing or
$\boundary$--compressing disk for $\Sigma^t$ in $H^t_+$.  Let $Q_a$
and $Q_b$ denote the points in $S^1\times (0,1)$ for which $F^s$ is
essentially above or below $\Sigma^t$, respectively.

In the present case $\Sigma'$ is a strongly irreducible bridge surface,
and so we must have $Q_a\cap Q_b = \emptyset$ and, further, no circle
$S^1\times\set{t}$ meets both $Q_a$ and $Q_b$ for any $t\in I$
(\cf~\cite[Lemma 7.3]{BlairEtAl}).  Therefore there is a $t_0\in I$
such that for all $s\in S^1$, $(s, t_0)$ does not meet $Q_a\cup Q_b$
and meets at most one vertex of the graphic.

Recall that a function $f \co S \to S^1$ is \defn{Morse} if every
critical point is non-degenerate and the images in $S^1$ of any two
critical points are distinct. We say that a smooth function $f$
is \defn{almost-Morse} if every critical point is non-degenerate and at
most two critical points are sent to the same level in $S^1$. Note
that according to this definition, Morse functions are also
almost-Morse.

Compare the following lemma to~\cite[Lemma 6.5]{BlairEtAl}, whose
proof is similar.

\begin{lemma}\label{si-bridge-sfce}
  There is a surface $S$ isotopic to $\Sigma'$ such the restriction of
  $\pi$ to $S_L$ is almost-Morse and for every regular value $s \in
  S^1$ of $\pi|_{S_L}$, each arc of $S_L \cap F^s_L$ is essential in
  both surfaces and each \scc of intersection is either essential in
  both surfaces or trivial in both surfaces.
  
  Furthermore, if $c_1$ and $c_2$ are two distinct critical points of
  $\pi|_S$ with $\pi(c_1)=\pi(c_2)=s_0$, then for small $\epsilon >
  0$, each arc of $F^{s_0-\epsilon}_L\cap S_L$ can be
  isotoped rel boundary in $F_L$ to have interior disjoint from the
  interior of each arc of $F^{s_0+\epsilon}_L\cap S_L$.
\end{lemma}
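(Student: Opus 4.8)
The plan is to run a sweep-out/graphic argument on $\Phi\co M\setminus L\to S^1\times I$, choosing the slice $t_0$ already provided by the strong irreducibility of $\Sigma'$, and then to carefully analyze the behavior of $\pi$ restricted to the surface $S=\Sigma^{t_0}$. First I would fix the level $t_0\in I$ whose existence was established above: for every $s\in S^1$, the pair $(s,t_0)$ avoids $Q_a\cup Q_b$ and meets at most one vertex of the graphic. Setting $S=\Sigma^{t_0}$, I claim $\pi|_{S_L}$ is almost-Morse. Critical points of $\pi|_S$ correspond exactly to tangencies of $F^s$ with $\Sigma^{t_0}$, i.e.\ to points of the graphic lying on the circle $S^1\times\{t_0\}$; non-degeneracy of these tangencies follows from genericity of $\Phi$ (stability away from the spines), and the condition that each circle $S^1\times\{t\}$ contains at most one \emph{vertex} of the graphic means at most two arcs of the graphic cross a given horizontal level, so at most two critical points of $\pi|_S$ share a value — this is precisely the almost-Morse condition.

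Next I would prove the statement about arcs and \sccs of intersection. Fix a regular value $s$ of $\pi|_{S_L}$, so $(s,t_0)$ is a regular value of $\Phi$. Suppose some arc or circle of $F^s_L\cap S_L$ is essential in one surface and trivial (or inessential) in the other; by an innermost-disk/outermost-arc argument this produces a compressing or $\boundary$--compressing disk for one of the two surfaces. A compressing/$\boundary$--compressing disk for $\Sigma^{t_0}$ on an essential-in-$\Sigma^{t_0}$ curve would put $(s,t_0)$ in $Q_a$ or $Q_b$, contradicting the choice of $t_0$. On the other hand, if the curve is essential in $\Sigma^{t_0}$ but trivial in $F^s_L$, then $F^s_L$ supplies a disk, again realizing $(s,t_0)\in Q_a\cup Q_b$; and a curve trivial in $\Sigma^{t_0}$ but essential in $F^s_L$ cannot occur since, after discarding trivial-in-$\Sigma^{t_0}$ circles by an isotopy of $S$ (which changes $S$ only within its isotopy class and does not affect the relevant counts), an innermost one bounds a disk in $S$ whose interior misses $\Sigma^{t_0}$, hence lies in $F^s_L$, making the curve trivial there too. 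Finally, as in \autoref{thin-sfce}, no arc can be trivial in both surfaces, since that would give an isotopy reducing $\partial S_L\cap\partial F^s_L$, which we have minimized. This is essentially the same dichotomy as in \autoref{thin-sfce}, transported from an essential surface to a strongly irreducible bridge surface via the graphic.

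For the last clause, let $c_1,c_2$ be critical points of $\pi|_S$ with $\pi(c_1)=\pi(c_2)=s_0$, and take $\epsilon>0$ small enough that $(s_0-\epsilon,s_0+\epsilon)\times\{t_0\}$ meets the graphic only at the two points over $c_1,c_2$ (possible by genericity, since the graphic is finite). The key point is that for $s_0-\epsilon\leq s\leq s_0+\epsilon$ the pattern of intersection $F^s_L\cap S_L$ changes only by the two local saddle/center moves at $c_1,c_2$, each supported in a small disk of $S$ and a small disk of the fibers; away from those two disks the intersection pattern in $F_L$ is constant up to the product isotopy of the fibers $F^{s_0-\epsilon}\simeq F^{s_0+\epsilon}$. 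Using the first part of the lemma, every arc of $F^{s_0\pm\epsilon}_L\cap S_L$ is essential in $F_L$, so no arc is absorbed or created by a trivial move; the only arcs affected are those passing through one of the two local disks, and a band move at a saddle replaces two arcs by two arcs whose new portions lie in an arbitrarily small disk. Hence, after the canonical product isotopy identifying the two fibers, the arcs of $F^{s_0-\epsilon}_L\cap S_L$ can be made to coincide with those of $F^{s_0+\epsilon}_L\cap S_L$ except inside these two small disks, so in particular their interiors can be isotoped rel $\partial F_L$ to be disjoint.

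The main obstacle, I expect, is the bookkeeping in this last paragraph: controlling precisely how arcs of intersection are modified as $s$ passes through the doubled critical value, and checking that a saddle move (which could in principle merge two arcs into a circle, or split in an awkward way) never destroys essentiality nor forces an unavoidable interior intersection between an ``old'' arc and a ``new'' arc. The first part of the lemma — that all arcs stay essential in both surfaces throughout — is exactly what rules out the bad cases, so the argument is really about leveraging that structural fact to localize the change to two small disks. Handling the remaining subtlety that $0$ (or the relevant $s_0$) might be a critical rather than regular value, as in the $s=0$ case of \autoref{thin-sfce}, is done by the same ``push to a nearby regular value'' trick used there.
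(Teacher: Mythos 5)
Your setup is correct and matches the paper's: choosing the level $t_0$ so that $(s,t_0)$ avoids $Q_a\cup Q_b$ and meets at most one vertex of the graphic, setting $S=\Sigma^{t_0}$, reading off that $\pi|_{S_L}$ is Morse or almost-Morse, and deducing the essentiality of arcs from strong irreducibility by the usual innermost-disk argument (you have a small slip --- the disk you want bounds in $S_L$ and has interior missing $F^s_L$, not the reverse --- but the intended argument is clear and correct).

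The gap is in the last paragraph, which is where the real content of the lemma lives. You observe that passing through the doubled critical value changes the intersection only in two small disks of $S$ and then assert that therefore the arcs of $F^{s_0-\epsilon}_L\cap S_L$ and $F^{s_0+\epsilon}_L\cap S_L$ can be made disjoint in $F_L$. That conclusion does not follow from localization alone: two saddle moves happening at the same level can in general produce before-and-after arc systems whose interiors \emph{must} cross in $F_L$ (this is precisely the kind of configuration depicted in \autoref{fig:saddleswitch}). The paper's proof handles this by analyzing how the two bands $b_1,b_2$ attach to the components of $\Nbf$. Strong irreducibility of $S$ rules out the bands attaching to different components of $\Nbf$ or to the same side of the same component (since either would let $\Nbf$ and $\Sbf$ be made disjoint, giving compressing/$\boundary$-compressing disks on opposite sides); orientability forces the remaining ends to also attach to opposite sides. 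From this structural constraint one sees that $\Ebf=\Nbf+b_2$ and $\Wbf=\Nbf+b_1$ can be pushed to opposite sides of the shared component, which is what actually gives the disjointness in $F_L$. You correctly flag this as "the main obstacle," but you do not supply the argument that resolves it; without the strong-irreducibility analysis of the band attachments, the final clause of the lemma is unproved.
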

Again, note that this lemma applies to surfaces in the exterior of
$L$, not $\L$.
\begin{proof}
  As noted above, we can choose a value $t_0$ such that for all $s\in
  S^1$, $(s, t_0)$ does not meet $Q_a\cup Q_b$ and meets at most one
  vertex of the graphic. Define $S=\Sigma^{t_0}$.

  The restriction of $\pi$ to $S_L$ defines a function $\pi_0 \co S_L
  \to S^1$. The critical values of $\pi_0$ are the values $s$ such
  that the point $(s, t_0)$ is contained in an edge of the
  graphic. Points in the interior of edges correspond to
  non-degenerate critical points, with vertices corresponding to two
  critical points whose images in $S^1$ coincide. (Cusp points
  correspond to degenerate critical points in $\pi_0$.) Because of the
  way we chose $t_0$, we conclude that $\pi_0$ is Morse (if there is
  no vertex $(s, t_0)$) or almost-Morse.

  Suppose first that $s\in(0, 1)$.  Note that every arc or \scc of
  $F^s_L\cap S_L$ which is trivial in $S_L$ must also be trivial in
  $F^s_L$ since $F^s_L$ is essential.  Suppose then that there is a
  trivial \scc or arc of $F^s_L\cap S_L$ in $F^s_L$ that is essential
  in $S_L$.  An innermost such \scc or arc bounds or cobounds a disk
  $D$ in $F^s_L$.  Isotope $D$ fixing $\boundary D\cap S_L$ so that
  $|D\cap S_L|$ is minimal.  An innermost disk argument shows that we
  may take the interior of $D$ disjoint from $S_L$, contradicting the
  assumption that $(s, t_0)$ does not meet $Q_a\cup Q_b$ for any $s\in
  S^1$.  If $s=0$, so that $D$ lies in $F^0_L$, then there must be
  some $s>0$ for which there is a similar disk in $F^s_L$.  Therefore,
  for every regular value $s \in S^1$ of $\pi|_{S_L}$, each arc of
  $S_L \cap F^s_L$ is essential in both surfaces and each \scc of
  intersection is either essential in both surfaces or trivial in both
  surfaces.

  If $\pi_0$ is Morse (so that $(s, t_0)$ does not meet a vertex of
  the graphic for any $s\in S^1$), we proceed as in the proof
  of~\autoref*{thin-sfce}.  We are left to show that when $\pi_0$ is
  almost-Morse with two critical points $c_1$, $c_2$ such that
  $\pi(c_1)=\pi(c_2)=v$ then we can pair arcs of
  $F^{s_0-\epsilon}_L\cap S_L$ with disjoint (in $F_L$) arcs of
  $F^{s_0+\epsilon}_L\cap S_L$ for small $\epsilon$.  As noted above,
  this case will only occur if $(s_0, t_0)$ is a valence-four vertex
  of the graphic in the intersection of the closures of $Q_a$ and
  $Q_b$.  Let $\epsilon >0$ be small enough so that this is the only
  vertex with $t$-value in the interval $[t_0-\epsilon,
  t_0+\epsilon]$.  Let $s_-=s_0-\epsilon$ and $s_+=s_0+\epsilon$.

  Therefore we must show that each component of $\Wbf=F^{s_-}_L\cap
  S^{t_0}_L$ can be isotoped in $F_L$ to be disjoint from each
  component of $\Ebf=F^{s_+}_L\cap S^{t_0}_L$.  Going from
  $\Nbf=F^{s_0}_L\cap S^{t_0+\epsilon}_L$ to $\Sbf=F^{s_0}_L\cap
  S^{t_0-\epsilon}$, we pass through two saddles of $F_L$.  This
  corresponds to adding two bands, $b_1$ and $b_2$, to components of
  $\Nbf$ to obtain the new components of $\Sbf$.  If ends of $b_1$ and
  $b_2$ are adjacent to different components of $\Nbf$ or if ends of
  $b_1$ and $b_2$ are adjacent to the same side of the same component,
  then by isotoping those components slightly we see that $\Nbf$ can
  be made disjoint from $\Sbf$ in $S_L$.  This contradicts the strong
  irreducibility of $S$ and implies in particular that the critical
  set is connected.  This in turn implies that at most 3 isotopy
  classes of arcs differ between $F^{s_-}_L\cap S_L$ and
  $F^{s_+}_L\cap S_L$.

  \autoref*{fig:saddleswitch} shows two examples that violate the
  strong irreducibility of $S$.  In both cases the critical set is a
  tree.  Pictured above is the surface $S$ in a regular neighborhood
  $B\subseteq M$ of this tree.  The intersections $F^s\cap B$ form a
  family of parallel horizontal disks.  Intersections of these disks
  with $S$ are shown at the critical level as well as just before and
  just after the critical level.  The projection of the arcs into $F$
  is shown below each $3$--dimensional picture.  

  \begin{figure}[htb]
    \begin{center}
      \includegraphics[width=4.5in]{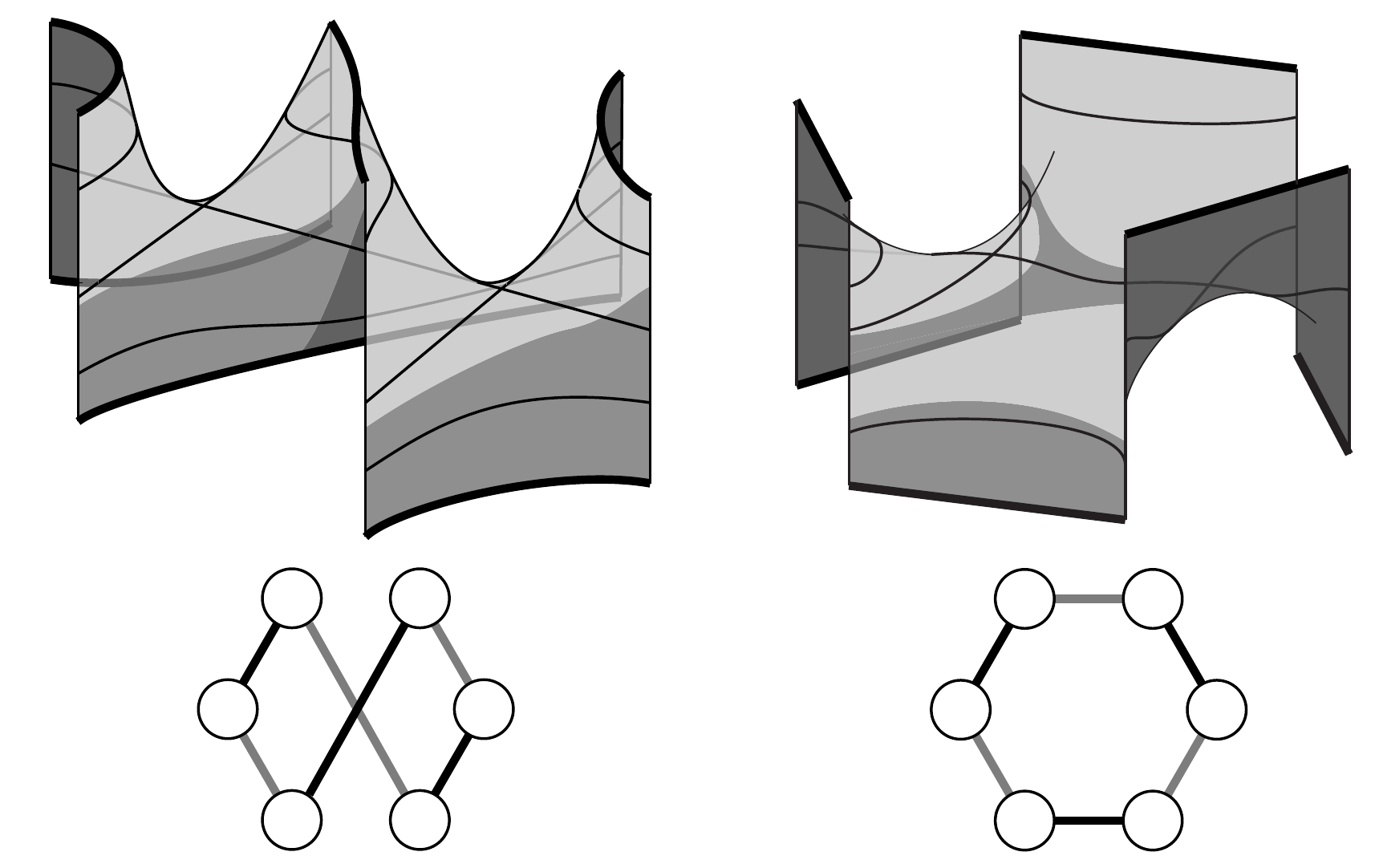}
      \caption{Two cases which violate the strong irreducibility of
      $S$.}\label{fig:saddleswitch}
    \end{center}
  \end{figure}

  From the previous discussion we see that an end of $b_1$ and an end
  of $b_2$ are adjacent to opposite sides of the same component
  $\alpha$ of $\Nbf$.  If the other ends of $b_1$ and $b_2$ are
  adjacent to a component $\alpha'$ of $\Nbf$, then those ends must be
  adjacent to opposite sides of $\alpha'$.  If they were not, one of
  $b_1$ or $b_2$ would be a band with ends attached to opposite sides
  of a component of $\Ebf$ or $\Wbf$, contradicting the orientability
  of $F$ and $\Sigma$.  Thus the arcs and \sccs that result from
  attaching $b_1$ to $\Nbf$ are disjoint from the arcs and \sccs that
  result from attaching $b_2$ to $\Nbf$ since we may push each such
  component slightly in the direction that the bands approach from.
  This shows that each component of $\Ebf$ can be made disjoint from
  each component of $\Wbf$ in $F_L$, and so each arc of
  $F^{s_0-\epsilon}_L\cap S_L$ can be isotoped rel boundary in $F_L$ to have
  interior disjoint from the interior of each arc of
  $F^{s_0+\epsilon}_L\cap S_L$.
\end{proof}

The next proposition follows by putting
together~\autoref*{lem:bridge-taxonomy}, \autoref*{thin-sfce}, and
\autoref*{si-bridge-sfce}.  Note first that if $S$ is an essential
surface, it is a thin surface in a generalized bridge splitting
by~\autoref*{bridge-sfce-classification}, and so
$b_{\Sigma}(K)\geq\frac{1}{2}|K\cap S|$.  Second, the surface $S$ is
obtained from $\Sigma$ by compression (possibly zero times), so
$g(S)\leq g(\Sigma)$.

\begin{prop}\label{exists-nice-surface}
  There is a surface $S\subseteq M$ such that the following properties
  hold:
  \begin{enumerate}
  \item $g(S)\leq g(\Sigma)$ 
  \item $b_{\Sigma}(K)\geq \frac{1}{2}|K\cap S|$,
  \item the restriction of $\pi$ to $S_L$ is Morse or almost-Morse,
  \item every arc of $S_L \cap F^s_L$ is essential in both surfaces for 
    regular values of $s \in S^1$ and,
  \item if $c_1$ and $c_2$ are two distinct critical points of
    $\pi|_{S_L}$ with $\pi(c_1)=\pi(c_2)=s_0$, then the arcs of
    $F^{s_0-\epsilon}_L\cap S_L$ may be isotoped rel boundary in $F_L$
    to have interiors disjoint from the interiors of
    $F^{s_0+\epsilon}_L\cap S_L$ for small $\epsilon$.
  \end{enumerate}    
\end{prop}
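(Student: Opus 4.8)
The plan is to read the proposition off the three alternatives of \autoref*{lem:bridge-taxonomy}, handling each with the structural lemma that produced it. First I would fix $\Sigma$ in minimal $K$--bridge position and invoke \autoref*{lem:bridge-taxonomy}: either (1) $M_{\L}$ contains an essential surface $S$ with $\chi(S_{\L})\geq\chi(\Sigma_{\L})$, or (2)--(3) there is a Heegaard surface $\Sigma'$ for $M_{L'}$ or $M_{L''}$ with $\chi(\Sigma'_{\L})\geq\chi(\Sigma_{\L})$, obtained from $\Sigma$ by destabilizations, boundary destabilizations, and removals of removable components, and $\Sigma'$ is a strongly irreducible bridge surface for $\L\setminus L'$ (case 2) or for $L\setminus L''$ with $K$ a core (case 3).

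In case (1) I would take $S$ as given, isotoped so that $\partial S_L\cap\partial F^s_L$ is minimal for every $s$ and $\pi|_{S_L}$ is Morse with $0$ a regular value; then \autoref*{thin-sfce} yields properties (3) and (4) (the almost-Morse alternative in (3) is not needed), and property (5) holds vacuously since a Morse function sends no two critical points to the same level. Property (1), $g(S)\leq g(\Sigma)$, holds because $S$ arises from $\Sigma$ by compressions, and property (2), $b_{\Sigma}(K)\geq\frac{1}{2}|K\cap S|$, is the first observation recorded just before the statement: by \autoref*{bridge-sfce-classification}, $S$ is a thin surface in a generalized bridge splitting obtained by untelescoping $\Sigma$, and a standard width argument then bounds $|K\cap S|$ by $2b_{\Sigma}(K)$. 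In cases (2) and (3) I would instead take $S=\Sigma^{t_0}$, the surface isotopic to $\Sigma'$ supplied by \autoref*{si-bridge-sfce}, which delivers (3), (4) and (5) directly. For property (1), each destabilization or boundary destabilization strictly lowers genus and removing a removable component turns a handlebody into a compression body without changing the genus of the splitting surface, so $g(S)=g(\Sigma')\leq g(\Sigma)$. For property (2): in case (3), $K$ is a core of a compression body bounded by $\Sigma'$, hence may be pushed into a spine so that $|K\cap S|=0$ and the inequality is trivial; in case (2), $K$ is among the bridge arcs of $\Sigma'$ and the isotopy of \autoref*{si-bridge-sfce} is transverse to $K$, so $|K\cap S|=|K\cap\Sigma'|$ is twice the number of $K$--bridges of $\Sigma'$, and since none of destabilization, boundary destabilization, or removal of a removable component increases the number of $K$--bridges, this number is at most $b_{\Sigma}(K)$, giving (2).

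The step I expect to be the main obstacle is this last one: tracking the bound $b_{\Sigma}(K)\geq\frac{1}{2}|K\cap S|$ through the recursion of \autoref*{lem:bridge-taxonomy}. One must check that a removable component $L_0\subseteq L$ is made a core by an ambient isotopy of $L_0$ alone, leaving $\Sigma$ and $K$ fixed, and that a trivial arc of $K$ in a handlebody stays trivial after drilling out such a core (and after compressing along a disk disjoint from $\L$), and one must confirm that each operation is compatible with the hypothesis that the starting $\Sigma$ is a minimal $K$--bridge surface so that the destabilization Lemma above applies at each stage. Granting these routine compatibility points, the proposition is the conjunction of \autoref*{lem:bridge-taxonomy}, \autoref*{thin-sfce}, \autoref*{si-bridge-sfce}, and the two observations already noted before the statement.
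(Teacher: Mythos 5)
Your proposal is correct and follows the paper's own (very terse) route: the proposition is indeed assembled from \autoref*{lem:bridge-taxonomy}, \autoref*{thin-sfce}, and \autoref*{si-bridge-sfce}, with the essential-surface case giving properties~(3)--(5) via \autoref*{thin-sfce} (with (5) vacuous since Morse functions have distinct critical values) and the strongly-irreducible cases giving them via \autoref*{si-bridge-sfce}, while properties~(1) and~(2) come from the two observations the paper records just before the statement. Your case analysis of property~(2) is in fact more explicit than the paper's, which only comments on the essential-surface case; your treatment of cases~(2) and~(3) via the core spine (so $K\cap S=\emptyset$) and the monotonicity of $K$--bridge number under destabilization, boundary destabilization, and removal of removable components fills in details the paper leaves implicit. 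The ``routine compatibility points'' you flag at the end are genuine but are also elided by the paper, so your proposal is at the same level of rigor and agrees with the intended argument.
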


\section{Distance bounds}
Recall that $L$ is the binding of an open book decomposition of the
closed, orientable, connected manifold $M$ with page $F$, $\phi$ is
the monodromy of the open book, and $A_{\phi}$ is the axis of $\phi$
as an automorphism of $\ACm(F)$.  By~\autoref*{exists-nice-surface}
there is a ``nice'' surface $S$ which we will use now.  Compare the
following lemma with~\cite[Theorem 5.3]{BlairEtAl}, whose proof is
similar.

\begin{lemma}\label{lem:distance-bound}
  There is an arc $x$ of $F_L\cap S_L$ whose class in $\ACm(F)$ (which
  we also write as $x$) satisfies
  \[ d(x, \phi(x)) \leq \frac{-4\chi(S_L)}{|\boundary S_L\cap
    \boundary F_L|}. \]
\end{lemma}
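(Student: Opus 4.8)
The plan is to use the surface $S$ from \autoref*{exists-nice-surface} together with the sweep-out perspective: as $s$ runs over $S^1$, the arcs of $F^s_L\cap S_L$ change only at the critical values of $\pi|_{S_L}$, and at a regular value these arcs are all essential in both surfaces. I would first fix a regular value $s_0$, say $s_0=0$, identify $F=F^0$, and consider the finite collection of isotopy classes in $\ACm(F)$ represented by the arcs of $F^0_L\cap S_L$. Since $\phi$ is (up to isotopy) the return map that carries $F^{0^-}$ to $F^{0^+}$ along the bundle structure, tracing the arcs of intersection around $S^1$ gives a path in $\ACm(F)$ from the class of an arc $x$ of $F^0_L\cap S_L$ to the class of $\phi(x)$. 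The length of this path, and hence a bound on $d(x,\phi(x))$, is controlled by how many arcs of intersection there are and how many times they change isotopy class.

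The key counting step is this. At a regular value the arcs of $F^s_L\cap S_L$ are all essential in $S_L$ (by the proposition), so their number is at most some fixed multiple of $-\chi(S_L)$; more precisely, a collection of pairwise non-parallel essential arcs in $S_L$ has at most $-3\chi(S_L)$ elements, while the endpoints lie in $\boundary S_L\cap\boundary F_L$, so the count is also bounded by $\tfrac12|\boundary S_L\cap\boundary F_L|$. As $s$ passes a critical value the isotopy classes change by a bounded amount: at a Morse critical point a single saddle or center alters at most a couple of arcs, and at an almost-Morse (valence-four) critical point property (5) of \autoref*{exists-nice-surface} guarantees that the arcs just before and just after can be made disjoint, so consecutive arc systems are distance $\leq 1$ apart in $\ACm(F)$ — in particular no more than three isotopy classes change, exactly as recorded in the proof of \autoref*{si-bridge-sfce}. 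Summing the number of critical points (bounded by $-\chi(S_L)$ via Morse theory on $S_L$) against the per-step distance change gives a bound of the form $c\cdot(-\chi(S_L))$ for the total distance traveled by the class of a single arc over one loop, i.e.\ for $d(x,\phi(x))$; pushing the constants and dividing by the number of arcs $\tfrac12|\boundary S_L\cap\boundary F_L|$ to pass to the arc realizing the average change yields the stated bound $d(x,\phi(x))\leq -4\chi(S_L)/|\boundary S_L\cap\boundary F_L|$. I would invoke \autoref*{lem:distance-intersection-number} (or \autoref*{lem:arc-intersection-number}) to convert "disjoint representatives" and "bounded intersection number" into distance bounds in $\ACm(F)$ where needed.

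The main obstacle is bookkeeping the behavior at critical values while keeping the marking $m$ respected: one must check that the isotopies used to simplify arcs across a saddle can be taken disjoint from the marked points, and that arcs which become boundary-parallel (hence not vertices of $\ACm(F)$) near a center tangency are handled without breaking the path — this is where property (5) and the orientability argument of \autoref*{si-bridge-sfce} do the real work. The other delicate point is the averaging: I need to choose $x$ to be an arc whose isotopy class is displaced by at most the total displacement divided by the number of arcs, which requires that the monodromy permutes the arc classes so that the sum of individual displacements equals the total path length; a pigeonhole over the (at most $\tfrac12|\boundary S_L\cap\boundary F_L|$ many) arcs then extracts the desired $x$. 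Once these are in place the inequality follows by a direct computation, so I do not expect further difficulty beyond tracking constants.
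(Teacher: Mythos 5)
The overall framework you propose — trace the arcs of $F^s_L\cap S_L$ around $S^1$, observe that they only change isotopy class at critical values of $\pi|_{S_L}$, sum up the resulting path lengths in $\ACm(F)$, and extract a short path by averaging over the $n=\tfrac12|\partial S_L\cap\partial F_L|$ arcs — is indeed the shape of the paper's argument. The averaging/pigeonhole step you flag as delicate is handled in the paper exactly as you describe, and your instinct that property (5) of \autoref*{exists-nice-surface} is what controls the almost-Morse (double-saddle) levels is correct.

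However, there is a genuine gap in the counting step, which is the heart of the lemma. You write that the number of critical points of $\pi|_{S_L}$ is ``bounded by $-\chi(S_L)$ via Morse theory on $S_L$,'' and this is false in the direction you need. For a circle-valued Morse function the Euler characteristic equals the alternating sum of critical points, so one gets $-\chi(S_L) = |c_1| - |c_0| - |c_2| \leq |c_1|$: Morse theory gives a \emph{lower} bound on the number of saddles, not an upper one, and there is no a priori upper bound at all. Indeed the surface $S$ constructed in \autoref*{si-bridge-sfce} may still have index-$0$ and index-$2$ critical points (trivial circle births and deaths are explicitly allowed by \autoref*{thin-sfce} and \autoref*{si-bridge-sfce}), and may have many more saddles than $-\chi(S_L)$. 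Without an upper bound on the number of critical events, your ``$c\cdot(-\chi(S_L))$'' total displacement is unsupported, and the final inequality does not follow.

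The paper closes this gap in two stages that your proposal does not supply. First, it observes that only \emph{active} critical values matter: a saddle whose bands are attached only to inessential circles does not change the isotopy class of any arc, so it contributes nothing to the paths $q_i$. Second, to bound the number $V_s + 2V_d$ of active critical values (weighted by multiplicity), it decomposes $S_L$ along the active arcs and circles into pieces $P_k$, defines the index $J(P_k) = b_k/2 - \chi(P_k)$ so that $\sum_k J(P_k) = -\chi(S_L)$, and then shows by a case analysis (using essentiality of active arcs/circles and the connectedness of the critical set from \autoref*{si-bridge-sfce}) that $J(P_k)\geq 1$ when $P_k$ contains one active saddle and $J(P_k)\geq 2$ when it contains two. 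This yields $V_s + 2V_d\leq -\chi(S_L)$, and hence $\sum_i q_i\leq 2V_s + 3V_d\leq 2(-\chi(S_L))$. That index computation is the missing ingredient; a naive Morse-theoretic count cannot replace it. (As a side note, your appeal to ``pairwise non-parallel essential arcs in $S_L$ number at most $-3\chi(S_L)$'' is not relevant here, since the arcs of $F^s_L\cap S_L$ need not be pairwise non-parallel; the correct count is simply $n = \tfrac12|\partial S_L\cap\partial F_L|$, as the paper uses.)
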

\begin{proof}
  The number of arcs of $F^s_L\cap S_L$ is constant for all regular values
  $s\in S^1$ of $\pi|_S$, namely equal to $n=|\boundary S_L\cap\boundary
  F_L|/2$.  Recall that we have arranged for $0$ to be a regular value
  of $\pi|_S$, and think of $S^1$ as the interval $I=[0,1]$ with its
  endpoints identified.  Suppose that $c\in S$ is an index-one critical
  point of $\pi|_S$ and that $v=\pi(c)\in I$ is the associated
  critical value.  Choose $s_-<v<s_+$ so that $v$ is the only 
  index-one critical value in $[s_-, s_+]$.  As $v$ goes from $s_-$ to
  $s_+$, arcs and circles of $F^{s_-}_L\cap S_L$ are banded together by
  one or two bands to obtain curves isotopic to $F^{s_+}_L\cap S_L$.  


  Let $A_{\pm}$ be the union of the arc components of
  $F^{s_{\pm}}_L\cap S_L$ adjacent to a band whose other end
  is adjacent to either an arc of $F^{s_{\pm}}_L\cap S_L$ or to a \scc
  of $F^{s_{\pm}}_L\cap S_L$ that is essential in $S$.  The
  components of $A_-$ are called \defn{pre-active arcs} and the
  components of $A_+$ are called \defn{post-active arcs}.  An arc that
  is pre- or post-active is called \defn{active}.  A circle component
  of $F^{s_{\pm}}_L\cap S_L$ to which a band or bands are adjacent is
  called an \defn{active circle} if it is essential in $S_L$.  If
  $A_{\pm}\neq\emptyset$, then we call $v$ an \defn{active index-one
    critical value} and say that the index-one critical points in its
  preimage are \defn{active index-one critical points}.  

  The surface $F_L$ is essential in $M_L$, so for regular values $s$,
  no component of $F^s_L\cap S_L$ is essential in $F^s_L$ and
  inessential in $S_L$.  Therefore each arc component of
  $F^{s_-}_L\cap S_L$ that is not pre-active is isotopic in $F_L$ to
  an arc component of $F^{s_+}_L\cap S_L$ that is not
  post-active. Thus there is a bijection between the arc components of
  $F^{s_-}_L\cap S_L$ and $F^{s_+}_L\cap S_L$ taking $A_-$ to $A_+$
  which is constant on the isotopy classes of arcs in $F^{s_-}_L$ that are
  not in $A_-$.

  By~\autoref*{exists-nice-surface} there is a pairing of arcs between
  $F^{s_-}_L\cap S_L$ and $F^{s_+}_L\cap S_L$ such that each arc is
  distance one from its paired arc in $\ACm(F)$.  Therefore we may
  construct paths in $\ACm(F)$ from the isotopy classes of arcs in
  $F^0_L\cap S_L$ to isotopy classes of arcs in $F^1_L\cap S_L$.  Let
  $q_i$ be the length of the $i$-th path, and note that as
  $\dm(F^0_L\cap S_L, F^1_L\cap S_L)$ gives the length of the shortest
  path between these two sets,
  \[ n\cdot\dm(F^0_L\cap S_L, F^1_L\cap S_L) \leq \sum_{i=1}^nq_i. \]

  We wish to bound the sum $\sum_{i=1}^n q_i$.  By the above remarks,
  each time $v$ passes through a critical value of $\pi$, arcs and
  circles are banded together to obtain new arcs and circles.  When
  $v$ is a critical value that has a single associated critical point
  $c$, at most two arcs are banded together.  When $v$ is a critical
  value with two associated critical points $c_1$ and $c_2$, at most
  three arcs are banded by \autoref*{si-bridge-sfce}.  Therefore if
  $V_s$ is the number of critical values that have a single associated
  critical point and $V_d$ is the number of critical values that have
  two associated critical points, then
  \begin{equation}\label{eq:path-sum}
    \sum_{i=1}^n q_i \leq 2V_s + 3V_d. 
  \end{equation}

  Let $P$ be the closure of the complement of the active arcs and
  circles in $S_L$, and denote its components by $P_1,\dots,P_m$.  The
  boundary of each $P_k$ is the union of arcs and \sccs contained in
  $\boundary M_L$ and active arcs and \sccs.   Each $P_k$ contains
  zero, one, or two active index-one critical points of $\pi|_{S_L}$,
  and there is at most one $P_k$ that contains two active index-one
  critical points.  Let $b_k$ be the number of active arcs in
  $\boundary P_k$, and define the \defn{index} of $P_k$ to be
  \[ J(P_k) = b_k/2 -\chi(P_k). \]
  Since each active arc shows up twice in $\boundary P$ and since
  Euler characteristic increases by one when cutting along an arc, we
  have
  \begin{equation}\label{eq:index-sum}
    -\chi(S_L) = \sum_k J(P_k).
  \end{equation}


  Fix $k$.  By hypothesis $P_k$ is not $S^2$.  If $P_k$ is a disk, its
  boundary cannot be an active circle or contain only a single active
  arc as active circles and arcs are essential in $S$.  Thus, if $P_k$
  is a disk, $J(P_k)\geq 0$.  If $P_k$ is not a disk, $-\chi(P_k)\geq
  0$.  It follows that if $P_k$ does not contain an active index-one
  critical point, then its index is nonnegative.  

  Suppose $P_k$ contains a unique active index-one critical point
  $c\in P_k$ and let $\alpha$ be a pre-active arc at $c$.  If $\alpha$
  is banded to a different pre-active arc, then $b_k\geq 4$ since
  there must be at least two pre-active arcs and two post active arcs.
  Then $J(P_k)\geq 1$.  Otherwise let $\gamma$ be the circle that is
  either banded to $\alpha$ or that results from banding $\alpha$ to
  itself.  In this case, $\gamma$ is essential, so $P_k$ is not a
  disk.  There are two active arcs in $\boundary P_k$, so
  $J(P_k)\geq 1$.  It follows that if $P_k$ contains a single active
  index-one critical point, then $J(P_k)\geq 1$.

  If $P_k$ contains two active index-one critical points, they must
  have the same height and be connected.  Let $C_-$ and $C_+$ be the
  number of pre-active and post-active circles, respectively, at $v$.
  Note that any pre- or post-active circle, along with at least one
  post-active arc at $v$, lies in  $\boundary P_k$.  Furthermore, the
  bands must lie in $P_k$ since the bands themselves contain the 
  index-one critical points.  Thus, if $P_k$ is a disk, then $C_-=C_+=0$ 
  and $b_k=6$.  If $P_k$ is not a disk, then either $C_-+C_+\geq 1$,
  $b_k\geq 2$, or $b_k=2$ and $P_k$ is not planar.  In any of these
  cases, $J(P_k)\geq 2$.  Consequently,
  \begin{equation}\label{eq:index-sum-inequality}
    \sum_k J(P_k)\geq V_s + 2V_d.
  \end{equation}

  Putting together~\autoref*{eq:path-sum}, \autoref*{eq:index-sum},
  and \autoref*{eq:index-sum-inequality}, we have
  \begin{align*}
    n\cdot \dm(F^0_L\cap S_L, F^1_L\cap S_L) &\leq \sum_{i=1}^nq_i\\
    & \leq 2V_s + 3V_d\\
    & \leq 2(V_s + 2V_d)\\
    & \leq 2\sum_k J(P_k)\\
    & = -2\chi(S_L),
  \end{align*}
  and since $n=|\boundary F_L\cap\boundary S_L|/2$ is the total number
  of arcs, the result follows.
\end{proof}

\begin{proof}[Proof of~\autoref*{theorem:main}]
  Fix integers $D>0$ and $g(M)\leq g\leq -\chi(F)$.
  By~\autoref*{exists-point-far-from-axis} there is a knot $K\subseteq
  F$ so that $d(K, A_{\phi}) > C$, where
  $C=D-4\chi(F)-\frac{1}{2}\translen{\phi} + 9\delta$.  Let $\Sigma$
  be a genus $g$ bridge surface for $\L$ that is $K$--minimal.

  By~\autoref*{exists-nice-surface} there is a surface $S$ such that
  $g(S)\leq g(\Sigma)$ and every arc of $F^s_L\cap S_L$ is essential
  in both surfaces for regular values of $s$.  We can
  apply~\autoref*{lem:distance-bound} to show that there is an arc $x$
  of $F_L\cap S_L$ whose class in $\ACm(F)$ satisfies
  \begin{align*}
    \dm(x, \phi(x)) &\leq \frac{-4\chi(S_L)}{|\boundary S_L\cap\boundary
      F_L|}\\
    &\leq 8g(S)-4\\
    &\leq 8g(\Sigma) - 4\\
    &\leq -8\chi(F) - 4.
  \end{align*}
  
  By~\autoref*{translation-distance}, we see that for this $x$,
  \[ \dm(x, A_{\phi})\leq \frac{1}{2}(-8\chi(F) - 4 - \translen{\phi}) +
  9\delta. \]
  Therefore,
  \begin{align*}
    \dm(x, K) &\geq \dm(K, A_{\phi}) - \dm(x, A_{\phi})\\
    & > C + 4\chi(F) + 2 + \frac{1}{2}\translen{\phi} - 9\delta\\
    & = D + 2.
  \end{align*}
  
  Using the inequality given
  by~\autoref*{lem:distance-intersection-number}, we see that
  \begin{align*}
    \geomint{x}{K} &\geq \dm(x, K) - 1\\
    & > D.
  \end{align*}
  Since $x$ is an arc in $S_L$, this implies that $|K\cap S| > D$.

  Recall that $S$ is either the bridge splitting $\Sigma$ or a thin
  surface in a generalized bridge splitting for $\L$ obtained by
  untelescoping $\Sigma$.  In either case, we have
  \[ b_{\Sigma}(K) > D. \]
  
  Since $\Sigma$ was arbitrary, it follows that $b_g(K) > D$.  

  Finally, suppose that $M=S^3$ and note that $K$ is not a torus knot
  from the bridge number bounds above.  Suppose that $M_K$ contains an
  essential torus.  By an isotopy minimizing $|T\cap L|$ and keeping
  $T$ disjoint from $K$, we obtain an essential punctured torus in
  $M_{\L}$.  An argument similar to the one above (with $T$ taking the
  place of $S$) shows that $T$ must meet $K$.  This is impossible,
  and so $M_K$ is atoroidal.  Therefore $K$ is a hyperbolic knot in
  $S^3$. 
\end{proof}

\bibliographystyle{plain}
\bibliography{knots-in-page}
\end{document}